 \numberwithin{equation}{section}
\definecolor{darkgreen}{rgb}{0,0.45,0} 
\renewcommand{\phi}{\varphi}
\newcommand{\act}{\triangleright}
\newcommand{\C}{\mathcal{C}}
\newcommand{\ox}{\otimes}
\newcommand{\x}{\times}
  \newtheorem{proposition}{Proposition}[section]
  \newtheorem{lemma}[proposition]{Lemma}
  \newtheorem{corollary}[proposition]{Corollary}
  \newtheorem{theorem}[proposition]{Theorem}
  \theoremstyle{definition}
  \newtheorem{definition}[proposition]{Definition}
  \newtheorem{examples}[proposition]{Examples}
  \newtheorem{remark}[proposition]{Remark}
  \newcounter{c}
  \renewcommand{\[}{\setcounter{c}{1}$$}
  \newcommand{\etyk}[1]{\vspace{-7.4mm}$$\begin{equation}\Label{#1}
  \addtocounter{c}{1}}
  \renewcommand{\]}{\ifnum \value{c}=1 $$\else \end{equation}\fi}
  \title{$\sf 
  S$-protomodularity of the category of cocommutative bialgebras}
\tikzstyle{red}=[fill=red, draw=black, shape=circle]
\tikzstyle{green}=[fill=white, draw=white, shape=rectangle, tikzit draw=black]
\tikzstyle{arrow}=[->, yshift=40pt]
\tikzstyle{stealth}=[->, >=stealth]
\tikzstyle{dashouille}=[->, dashed]
\author{Florence Sterck}
\address{Institut de Recherche en Math\'ematique et Physique, Universit\'e catholique de Louvain, Chemin du Cyclotron 2, 1348 
Louvain-la-Neuve \and D\'epartement de Math\'ematique, Universit\'e Libre de Bruxelles, Campus de la Plaine – CP 210
Boulevard du Triomphe
1050 Bruxelles, Belgium }
\email{florence.sterck@uclouvain.be}
\keywords{Cocommutative bialgebras, Split extensions, Protomodularity, Commutators, Symmetric monoidal categories.} 
\begin{document}
\maketitle

\begin{abstract}
We prove that the category of cocommutative bialgebras in any symmetric monoidal category (that has equalizers) is an $S$-protomodular category with respect to a particular class of split extensions of cocommutative bialgebras. We also obtain the ``partial'' well-known \emph{Smith is Huq} condition, meaning that two $S$-equivalence relations centralize each other as soon as the normal subobjects associated with them commute in the sense of Huq.
\end{abstract}

\section*{Introduction}

The notion of protomodular category was introduced by Bourn in \cite{Bourn}. The categories of groups, rings, Lie algebras, crossed modules, rings, the dual of the category of sets are examples of protomodular categories. %These categories have useful properties, as the fact that a morphism is a monomorphism if and only if its kernel is trivial. 
In the pointed case, a category $\C$ is protomodular if and only if the Split Short Five Lemma holds. This means that for any diagram of the form
 \begin{equation}\label{SSFL}
\begin{tikzpicture}[descr/.style={fill=white},baseline=(A.base),scale=0.8] 
\node (A) at (0,0) {$A_1$};
\node (B) at (2.5,0) {$B_1$};
\node (C) at (-2.5,0) {$X_1$};
\node (A') at (0,-2) {$A_2$};
\node (B') at (2.5,-2) {$B_2$};
\node (C') at (-2.5,-2) {$X_2$};
\node (O1) at (-4.5,0) {$0$};
\node (O1') at (-4.5,-2) {$0$};
\node (O2) at (4.5,0) {$0$};
\node (O2') at (4.5,-2) {$0$};
\path[->,font=\scriptsize]
(O1) edge node[above] {$ $} (C)
(B) edge node[above] {$ $} (O2)
(B') edge node[above] {$ $} (O2')
(O1') edge node[above] {$ $} (C')
(B.south) edge node[right] {$ g$}  (B'.north)
 (C.south) edge node[left] {$ v $}  (C'.north)
(A.south) edge node[left] {$ p$} (A'.north)
(C'.east) edge node[above] {$\kappa_2$} (A'.west)
([yshift=-4pt]A'.east) edge node[below] {$\alpha_2$} ([yshift=-4pt]B'.west)
([yshift=2pt]B'.west) edge node[above] {$e_2$} ([yshift=2pt]A'.east)
(C.east) edge node[above] {$\kappa_1$} (A.west)
([yshift=-4pt]A.east) edge node[below] {$\alpha_1$} ([yshift=-4pt]B.west)
([yshift=2pt]B.west) edge node[above] {$e_1$} ([yshift=2pt]A.east);
\end{tikzpicture} 
 \end{equation}
 where $\kappa_i$ is the kernel of $\alpha_i$ and $\alpha_i \cdot e_i = 1_{B_i}$ for any $i \in \{ 1,2\}$, then $p$ is an isomorphism whenever $v$ and $g$ are, this property is often referred to as the ``Split Short Five Lemma'' holds in $\C$.

It is well-known that the category of internal groups in a finitely complete category is protomodular \cite{Bourn}. In particular, this result implies that the category of cocommutative Hopf algebras in a symmetric monoidal category, that has equalizers, is protomodular, seen as internal groups in the category of cocommutative coalgebras. The category of cocommutative Hopf algebras over a field is even semi-abelian \cite{GSV}. 

In this paper, we are interested in cocommutative bialgebras in any symmetric monoidal category that has equalizers. As a matter of fact, it was proven in \cite{GVdL} that the category of cocommutative $K$-bialgebras is not a protomodular category. 
Similarly, the category of monoids is not a protomodular category. However, a class of split epimorphisms of monoids, called \emph{Schreier split epimorphisms}, turned out to have some very interesting properties, similar to the ones of split epimorphisms of groups. For example, \emph{Schreier split epimorphisms} are equivalent to the actions of monoids, the Split Short Five Lemma holds, etc. \cite{BMS,BMMS}. The authors in \cite{BMS}, introduced the notion of $S$-protomodularity, that is the protomodularity with respect to a class $S$ of split epimorphisms. It implies that we have the Split Short Five Lemma as in \eqref{SSFL} whenever $(\alpha_1,e_1)$ and $(\alpha_2,e_2)$ are in the class $S$.
The main example is the category of monoids with the class of \emph{Schreier split epimorphisms}.

In \cite{Stercksplit}, we defined a notion of split extensions for (non-associative) bialgebras (and non-associative Hopf algebras) such that they have ``group-like'' properties. More precisely, we showed that this definition of split extension of (non-associative) bialgebras is equivalent to the notion of action of (non-associative) bialgebras. Moreover, we proved the validity of the Split Short Five
Lemma for these kinds of split extensions. 
In particular, these results restrict to the case of cocommutative bialgebras.

Hence, it is natural to hope that the category of cocommutative bialgebras in any symmetric monoidal category is $S$-protomodular with respect to the class introduced in \cite{Stercksplit}.

In a finitely complete category, we can define two types of centrality: the centrality of equivalence relations in the sense of Smith and the centrality of normal monomorphisms in the sense of Huq. Note that these notions of centrality are not independent.  If two relations of equivalence centralize each other (in the sense of Smith) then the corresponding normal monomorphisms necessarily centralize in the sense of Huq \cite{BG}.

The converse is not true in general. If $\mathcal{C}$ is a category such that any two equivalence relations always centralize each other as soon as their normalizations centralize in the sense of Huq, one says that $\C$ satisfies the so-called \textit{Smith is Huq} property \cite{BG,MFVdl}. 

In \cite{MFM}, the authors introduced the notion of \emph{Smith is Huq} for pointed $S$-protomodular categories. In that context, the \emph{Smith is Huq} property can be expressed as follows: two $S$-equivalence relations centralize each other if and only if their associated normal subobjects commute (in the sense of Huq). 

In this paper, we prove that the category of cocommutative bialgebras in any symmetric monoidal category is $S$-protomodular with respect to the class of split extensions introduced in \cite{Stercksplit}, we give a description of the centrality in the sense of Huq for two subbialgebras of a cocommutative bialgebra and we examine the \emph{Smith is Huq} condition for cocommutative bialgebras. 
The results obtained in this paper generalize results in \cite{BMMS} and \cite{MFM} on the category of monoids.

The layout of this article is as follows: the first section contains some preliminaries on cocommutative bialgebras in a symmetric monoidal category.

In the second section, we prove that the category of cocommutative bialgebras in any symmetric monoidal category (that has equalizers) is $S_{coc}$-protomodular with respect to the class $S_{coc}$ of split extensions of cocommutative bialgebras defined in \cite{Stercksplit}.

In the third section, we describe the Huq commutator of two subbialgebras of a cocommutative bialgebra.

In the last section, we investigate the \emph{Smith is Huq} condition for cocommutative bialgebras by using the results of \cite{MFM}.

\section{Preliminaries}

\subsection{Bialgebras in a symmetric monoidal category}
We recall that a \textit{monoidal category} is given by a triple $(\mathcal{C}, \otimes, I)$ where $\mathcal{C}$ is a category, $\otimes \colon \mathcal{C} \times \mathcal{C} \rightarrow \mathcal{C}$ a bifunctor and $I$ is the identity element (we omit to explicit the three natural isomorphisms, the associator, the right unit and the left unit).

A \textit{braided monoidal category} is a 4-tuple  $(\mathcal{C}, \otimes, I, \sigma)$ where  $(\mathcal{C}, \otimes, I)$ is a monoidal category and $\sigma$ is a \textit{braiding}. A braiding consists of a family of natural isomorphisms $\sigma_{X,Y} \colon X \otimes Y \rightarrow Y \otimes X$ satisfying \[ \sigma_{X \ox Y, Z} = (\sigma_{X,Z} \ox 1_Y)\cdot (1_X \ox \sigma_{Y,Z} )\]
 \[ \sigma_{X , Y \ox Z} = (1_Y \ox \sigma_{X,Z} )\cdot (\sigma_{X,Y} \ox 1_Z).\]
 A braided monoidal category is called \textit{symmetric} when \begin{equation}\label{symm}
 \sigma_{Y,X}^{-1} = \sigma_{X,Y}.
 \end{equation}
In this paper, we omit the indexes of the braiding when this does not bring any confusion.

An \textit{algebra} in a symmetric monoidal category $(\mathcal{C}, \otimes, I, \sigma)$ is given by an object $A \in \mathcal{C}$ endowed with a morphism $m \colon A \ox A \rightarrow A$, called the multiplication. An algebra is associative and unital when there is a morphism $u_A \colon I \rightarrow A$ called the unit, such that the following equalities are satisfied
%An\textit{ algebra} over a symmetric monoidal category $(\mathcal{C}, \otimes, I, \sigma)$ is given by an object $A \in \mathcal{C}$ endowed with a morphism of $\mathcal{C}$, called the multiplication $m \colon A \ox A \rightarrow A$, and a morphism of $\mathcal{C}$ called the unit $u_A \colon I \rightarrow A$. The algebras that we consider in this paper are unital and associative, i.e.\
\begin{equation}\label{unital multiplication}
m \cdot (u_A \ox 1_A) = 1_A = m \cdot (1_A \ox u_A)
\end{equation}
\begin{equation}\label{ass}
 m \cdot (m \ox 1_A)  = m \cdot (1_A \ox m) 
\end{equation}  
\begin{center}
% textidote: ignore begin

\begin{tikzpicture}[descr/.style={fill=white},baseline=(A.base)] 
\node (A) at (0,0) {$ A \ox A$};
\node (B) at (2,0) {$A$};
\node (C) at (-2,0) {$A $};
\node (D) at (0,-1.5) {$ A.$};
\draw[commutative diagrams/.cd, ,font=\scriptsize]
(B.south west) edge[commutative diagrams/equal] (D.north east)
(C.south east) edge[commutative diagrams/equal] (D.north west);
\path[->,font=\scriptsize]
(A.south) edge node[descr] {$m$} (D.north)
(C.east) edge node[above] {$ u_A \ox 1_A$} (A.west)
(B.west) edge node[above] {$1_A \ox u_A$} (A.east)
;
\end{tikzpicture} 
\begin{tikzpicture}[descr/.style={fill=white},baseline=(D.base)] 
\node (A) at (0,0) {$ A \ox A $};
\node (B) at (3,0) {$A.$};
\node (C) at (3,1.5) {$A \ox A $};
\node (D) at (0,1.5) {$ A \ox A \ox A$};
\path[->,font=\scriptsize]
(C.south) edge node[right] {$m $} (B.north)
(D.south) edge node[descr] {$1_A \ox m$} (A.north)
(A.east) edge node[above] {$ m $} (B.west)
(D.east) edge node[above] {$ m \ox 1_A$} (C.west)
;
\end{tikzpicture}
% textidote: ignore end
\end{center}
All the algebras that we will consider in this paper are associative and unital.
A\textit{ morphism of algebras} $f \colon A \rightarrow B$ is a morphism in $\mathcal{C}$ such that the following diagrams commute
%\begin{equation*}
%m \cdot (f \ox f) = f \cdot m
%\end{equation*}
%\begin{equation*}
%f \cdot u = u
%\end{equation*}
\begin{center}
% textidote: ignore begin
\begin{tikzpicture}[descr/.style={fill=white},baseline=(D.base)] 
\node (A) at (0,0) {$ A $};
\node (B) at (3,0) {$B$};
\node (C) at (3,1.5) {$B \ox B $};
\node (D) at (0,1.5) {$ A \ox A$};
\path[->,font=\scriptsize]
(C.south) edge node[descr] {$m$} (B.north)
(D.south) edge node[descr] {$m$} (A.north)
(A.east) edge node[above] {$ f$} (B.west)
(D.east) edge node[above] {$ f \ox f$} (C.west)
;
\end{tikzpicture} 
\begin{tikzpicture}[descr/.style={fill=white},baseline=(A.base)] 
\node (A) at (0,0) {$ A $};
\node (C) at (-2,0) {$I $};
\node (D) at (0,-1.5) {$ B.$};
\path[->,font=\scriptsize]
(A.south) edge node[descr] {$f $} (D.north)
(C.south east) edge node[descr] {$u_B $} (D.north west)
(C.east) edge node[above] {$ u_A $} (A.west)
;
\end{tikzpicture} 
% textidote: ignore end
\end{center}
A coalgebra is the dual notion of the notion of an algebra. In other words, a coalgebra over $(\mathcal{C}, \otimes, I, \sigma)$ is an object $C \in \mathcal{C}$ with a comultiplication $\Delta \colon C \rightarrow C \ox C$.  From now on, the coalgebras will always be coassociative, i.e. the following equality holds
\begin{equation}\label{coass comultiplication}
(\Delta \ox 1_C) \cdot \Delta = (1_C \ox \Delta) \cdot \Delta
\end{equation}
\begin{center}
\begin{tikzpicture}[descr/.style={fill=white},baseline=(D.base)] 
\node (A) at (0,0) {$ C \ox C $};
\node (B) at (3,0) {$C \ox C \ox C.$};
\node (C) at (3,1.5) {$C \ox C $};
\node (D) at (0,1.5) {$ C$};
\path[->,font=\scriptsize]
(C.south) edge node[descr] {$1_C \ox \Delta$} (B.north)
(D.south) edge node[descr] {$\Delta$} (A.north)
(A.east) edge node[above] {$ \Delta \ox 1_C$} (B.west)
(D.east) edge node[above] {$ \Delta$} (C.west)
;
\end{tikzpicture}
% textidote: ignore end
\end{center}
We will also assume that the coalgebras are counital, meaning that there exists a morphism $\epsilon_C \colon C \rightarrow I$, called counit, satisfying the condition:
\begin{equation}\label{counital comultiplication}
( \epsilon_C \ox 1_C) \cdot \Delta = 1_C = (1_C  \ox \epsilon_C) \cdot \Delta,
\end{equation}
as expressed  by the commutativity of the following diagram
% textidote: ignore begin
\begin{center}
\begin{tikzpicture}[descr/.style={fill=white},baseline=(A.base)] 
\node (A) at (0,0) {$ C \ox C$};
\node (B) at (2,0) {$C$};
\node (C) at (-2,0) {$C $};
\node (D) at (0,-1.5) {$ C$};
\draw[commutative diagrams/.cd, ,font=\scriptsize]
(B.south west) edge[commutative diagrams/equal] (D.north east)
(C.south east) edge[commutative diagrams/equal] (D.north west);
\path[->,font=\scriptsize]
(D.north) edge node[descr] {$\Delta$} (A.south)
(A.west) edge node[above] {$  \epsilon_C \ox 1_C$} (C.east)
(A.east) edge node[above] {$1_C \ox \epsilon_C$} (B.west)
;
\end{tikzpicture} 
\end{center}
Similarly, a \textit{morphism of coalgebras} $g \colon C \rightarrow D$ is  a morphism in $\mathcal{C}$ such that the following two diagrams commute
%\begin{equation*}
%\Delta \cdot g  = (g \ox g) \cdot \Delta
%\end{equation*}
%\begin{equation*}
%\epsilon \cdot g = \epsilon
%\end{equation*}
\begin{center}
% textidote: ignore begin
\begin{tikzpicture}[descr/.style={fill=white},baseline=(D.base)] 
\node (A) at (0,0) {$ D $};
\node (B) at (3,0) {$C$};
\node (C) at (3,1.5) {$C \ox C $};
\node (D) at (0,1.5) {$ D \ox D$};
\path[->,font=\scriptsize]
(B.north) edge node[descr] {$\Delta$} (C.south) 
 (A.north) edge node[descr] {$\Delta$} (D.south)
 (B.west) edge node[above] {$ g$} (A.east)
(C.west) edge node[above] {$ g\ox g$} (D.east)
;
\end{tikzpicture} 
\begin{tikzpicture}[descr/.style={fill=white},baseline=(A.base)] 
\node (A) at (0,0) {$ D $};
\node (C) at (-2,0) {$I $};
\node (D) at (0,-1.5) {$ C.$};
\path[->,font=\scriptsize]
(D.north)edge node[descr] {$g $} (A.south) 
(D.north west) edge node[descr] {$\epsilon_C $} (C.south east)
(A.west) edge node[above] {$ \epsilon_D $} (C.east)
;
\end{tikzpicture} 
% textidote: ignore end
\end{center}

We also recall that a \textit{bialgebra} is a 5-tuple $(B,m,u_B,\Delta,\epsilon_B)$ where $(B,m,u_B)$ is an algebra, $(B,\Delta, \epsilon_B)$ is a coalgebra and $\Delta, \epsilon_B$ are algebra morphisms (which is equivalent of asking that $m$, $u_B$ are coalgebra morphisms) i.e.
\begin{equation}\label{delta et m}
 \Delta \cdot m = (m \ox m) \cdot (1_B \ox \sigma \ox 1_B) \cdot (\Delta \ox \Delta)
 \end{equation}
 \begin{equation}\label{delta et u}
 \Delta \cdot u_B = u_B \ox u_B
 \end{equation}
 \begin{equation}\label{epsilon et m}
 \epsilon_B \cdot m = \epsilon_B \ox \epsilon_B
 \end{equation}
 \begin{equation}\label{epsilon et u}
 \epsilon_B \cdot u_B = 1_I
\end{equation}
Moreover, a morphism in $\mathcal{C}$ is a \textit{morphism of bialgebras} if it is a morphism of algebras and coalgebras.

A bialgebra is called cocommutative when its underlying coalgebra structure is cocommutative, it means that \begin{equation}\label{coco}
\sigma \cdot \Delta = \Delta.
\end{equation}  

The category of cocommutative bialgebras in $\C$, a symmetric monoidal category, is denoted by $\sf Bial_{\C,coc}$.

\begin{examples}
(1) In the symmetric monoidal category $(\mathsf{Set}, \times , \{\star\})$ of sets where $\sigma$ is the twist morphism ( where $\sigma(x,y) = (y,x)$ for any element $x$ of a set $X$ and any element $y$ of a set  $Y$), every object has a coalgebra structure with $\Delta$ being the diagonal and $\epsilon$ the morphism sending every element to the singleton. Hence, a (cocommutative) bialgebra (or algebra) is a monoid.

(2) In the symmetric monoidal category $(\mathsf{Vect_K},\ox,K)$ of vector spaces over a field $K$ where $\sigma$ is the twist morphism (defined by $\sigma(x \ox y) = y \ox x$ for any $x \ox y \in X \ox Y$ ), we recover the notion of $K$-algebra, $K$-coalgebra and $K$-bialgebra.

(3) In \cite{CG}, a symmetric monoidal category was introduced such that Hom-algebras, Hom-coalgebras and Hom-bialgebras (see \cite{MS}) coincide with the algebras, coalgebras and bialgebras in this symmetric monoidal category. 

%(4) In \cite{CL}, the authors showed that Turaev's Hopf group-coalgebras (see \cite{Tuarev}) are Hopf algebras in a symmetric monoidal category which they called Turaev category.  
\end{examples}

\subsection{Adjunction cocommutative bialgebras and cocommutative coalgebras}

It is well-known that we have the following adjunction between cocommutative bialgebras and cocommutative coalgebras.

\begin{equation}\label{adj}
% textidote: ignore begin 
\begin{tikzpicture}[descr/.style={fill=white},scale=1.2,baseline=(A.base)]
\node at (1.5,0) {$\perp$};
\node (A) at (0,0) {$\mathsf{BiAlg_{\C,coc}}$};
\node (B) at (3,0) {$\mathsf{CoAlg}_{\C,coc}$};
 \path[->,font=\scriptsize]
 ([yshift=5pt]B.west) edge node[above] {$F$}([yshift=5pt]A.east) 
([yshift=-5pt]A.east) edge node[below] {$U$} ([yshift=-5pt]B.west);
\end{tikzpicture}
% textidote: ignore end
\end{equation}
where $U$ is the forgetful functor and $F$ is the free algebra functor. 

In particular, this adjunction implies that $U$ preserves the limits and that a monomorphism of bialgebras is in particular also a monomorphism of coalgebras. This observation will be useful several times in this paper.

\subsection{Limits in the category of cocommutative bialgebras}

We recall the constructions of products, equalizers and pullbacks in the category of cocommutative bialgebras in a symmetric monoidal category that has equalizers.
It is interesting to recall that the categorical product of cocommutative bialgebras is given by the monoidal product. 

\begin{proposition}\label{tensor product}
In the category $\sf BiAlg_{\C,coc}$, the categorical product of two cocommutative bialgebras $A$ and $B$ is given by the monoidal product
$(A \otimes B, \pi_A,\pi_B)$ where the projections $\pi_A \colon A \otimes B \rightarrow A$ and $\pi_B \colon A \otimes B \rightarrow B$ are defined by $\pi_A \coloneqq 1_A \otimes \epsilon_B $ and $\pi_B \coloneqq \epsilon_A \otimes 1_B$.

\begin{equation}\label{triangleproduit}
% textidote: ignore begin
\begin{tikzpicture}[descr/.style={fill=white},baseline=(A.base),scale=1.3] 
\node (A) at (0,0) {$ A \otimes B$};
\node (B) at (2,0) {$B$};
\node (C) at (-2,0) {$A $};
\node (D) at (0,-1.5) {$ C.$};
\path[->,font=\scriptsize]
  (D.north east)edge node[right,xshift=5pt] {$g$}(B.south west)
  (D.north west)edge node[left,xshift=-5pt] {$f$}(C.south east)
  (A.west)edge node[above] {$\pi_A$}(C.east)
 (A.east)edge node[above] {$\pi_B$} (B.west)
;
\path[->,font=\scriptsize,dashed]
(D.north) edge node[descr] {$(f \ox g) \cdot \Delta$}(A.south)
;
\end{tikzpicture}
% textidote: ignore end
\end{equation}
\end{proposition}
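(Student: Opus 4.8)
The plan is to verify the universal property of the binary product directly, taking $h \coloneqq (f \otimes g)\cdot \Delta_C$ as the candidate comparison morphism; the argument splits naturally into checking the structure on $A\otimes B$, existence, and uniqueness.

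First I would record that $A\otimes B$ carries the canonical ``tensor product'' bialgebra structure: multiplication $(m_A \otimes m_B)\cdot(1_A \otimes \sigma \otimes 1_B)$, unit $u_A \otimes u_B$ (via $I \cong I \otimes I$), comultiplication $(1_A \otimes \sigma \otimes 1_B)\cdot(\Delta_A \otimes \Delta_B)$, and counit $\epsilon_A \otimes \epsilon_B$. Associativity and coassociativity, (co)unitality, and the bialgebra axioms \eqref{delta et m}--\eqref{epsilon et u} for $A\otimes B$ all follow from the corresponding axioms for $A$ and $B$ together with naturality of $\sigma$ and the hexagon identities, while cocommutativity \eqref{coco} of $A\otimes B$ uses \eqref{coco} for $A$ and $B$, naturality of $\sigma$, and the symmetry \eqref{symm}. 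With respect to this structure $\pi_A = 1_A \otimes \epsilon_B$ and $\pi_B = \epsilon_A \otimes 1_B$ are morphisms of bialgebras: they are coalgebra morphisms because $\epsilon_A$ and $\epsilon_B$ are counits, and algebra morphisms by \eqref{epsilon et m}--\eqref{epsilon et u} (again reorganising tensor factors via naturality of $\sigma$).

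Next, existence. I would show that $h = (f\otimes g)\cdot\Delta_C$ is a morphism of bialgebras. Compatibility with counits is immediate: $\epsilon_{A\otimes B}\cdot h = (\epsilon_A f \otimes \epsilon_B g)\cdot \Delta_C = (\epsilon_C \otimes \epsilon_C)\cdot\Delta_C = \epsilon_C$ by \eqref{counital comultiplication}; compatibility with units follows from $\Delta_C\cdot u_C = u_C \otimes u_C$ (axiom \eqref{delta et u} for $C$) and the fact that $f,g$ preserve units. For compatibility with comultiplications, one expands $\Delta_{A\otimes B}\cdot h$ using that $f$ and $g$ are coalgebra morphisms, coassociativity \eqref{coass comultiplication} and cocommutativity \eqref{coco} of $C$, and naturality of $\sigma$, arriving at $(h\otimes h)\cdot\Delta_C$. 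For compatibility with multiplications, one expands $h\cdot m_C$ using the bialgebra axiom \eqref{delta et m} for $C$ (that $\Delta_C$ is an algebra morphism), then that $f$ and $g$ are algebra morphisms, and finally reorganises the braidings via the hexagon identities and cocommutativity of $C$ to reach $m_{A\otimes B}\cdot(h\otimes h)$. Lastly $\pi_A\cdot h = (f \otimes \epsilon_B g)\cdot\Delta_C = (f\otimes\epsilon_C)\cdot\Delta_C = f$, and symmetrically $\pi_B\cdot h = g$, both by \eqref{counital comultiplication}. For uniqueness, the key point is the identity $(\pi_A\otimes\pi_B)\cdot\Delta_{A\otimes B} = 1_{A\otimes B}$, which is just counitality \eqref{counital comultiplication} of $A$ and $B$ written out; then, if $h'\colon C \to A\otimes B$ is a bialgebra morphism with $\pi_A h' = f$ and $\pi_B h' = g$, the fact that $h'$ is a coalgebra morphism gives
$$ h' = (\pi_A\otimes\pi_B)\cdot\Delta_{A\otimes B}\cdot h' = (\pi_A\otimes\pi_B)\cdot(h'\otimes h')\cdot\Delta_C = (\pi_A h' \otimes \pi_B h')\cdot\Delta_C = (f\otimes g)\cdot\Delta_C = h. $$

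I expect the main obstacle to be the two ``string-diagram'' verifications -- that $A\otimes B$ is itself a cocommutative bialgebra and that $h$ is compatible with multiplications -- since these are the steps in which several instances of the braiding must be moved past one another using the hexagon axioms together with the cocommutativity of $C$; the remaining identities are routine consequences of the (co)unit and (co)associativity laws. A more conceptual alternative that bypasses most of this computation is to observe that $\otimes$ endows $\mathsf{CoAlg}_{\C,coc}$ with finite products (projections $1\otimes\epsilon$ and $\epsilon\otimes 1$), that $\mathsf{BiAlg}_{\C,coc}$ is precisely the category of internal monoids in $\mathsf{CoAlg}_{\C,coc}$, and that the forgetful functor from internal monoids creates finite products; combined with the adjunction \eqref{adj} this yields the statement immediately, reducing everything to the cocommutative-coalgebra case.
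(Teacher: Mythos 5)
Your proof is correct and complete: the paper itself only \emph{recalls} this standard fact without giving a proof, and your direct verification of the universal property (tensor bialgebra structure, $h=(f\otimes g)\cdot\Delta_C$ as comparison morphism, uniqueness via $(\pi_A\otimes\pi_B)\cdot\Delta_{A\otimes B}=1_{A\otimes B}$ and the fact that any competitor is a coalgebra morphism) is exactly the standard argument, with cocommutativity of $C$ entering precisely where you say it does, namely in showing $\Delta_{A\otimes B}\cdot h=(h\otimes h)\cdot\Delta_C$. Your closing remark --- that everything reduces to products in $\mathsf{CoAlg}_{\C,coc}$ because $\mathsf{BiAlg}_{\C,coc}$ is the category of internal monoids there and the forgetful functor creates finite products --- is also consistent with how the paper uses the adjunction \eqref{adj} elsewhere.
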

From now on, we are considering a symmetric monoidal category $\C$ that has equalizers.

The equalizers are defined as in \cite{Agore}.
Let $f,g \colon A \to B$ be two morphisms of bialgebras, then the following construction in $\C$ is the equalizer of $f$ and $g$. 

 \begin{equation}\label{equalizer}
% textidote: ignore begin 
\begin{tikzpicture}[descr/.style={fill=white},baseline=(A.base)] 
\node (A) at (-1,0) {$A$};
\node (B) at (2.5,0) {$B \ox A $};
\node (C) at (-2.5,0) {$E$};
\path[->,font=\scriptsize]
([yshift=-4pt]A.east) edge node[below] {$(f \ox 1_A) \cdot \Delta$} ([yshift=-4pt]B.west)
([yshift=0pt]C.east) edge node[above] {$\varepsilon $} ([yshift=0pt]A.west)
([yshift=4pt]A.east) edge node[above] {$( g \ox 1_A) \cdot \Delta$} ([yshift=4pt]B.west)
;
\end{tikzpicture}.
% textidote: ignore end 
\end{equation}
where $\varepsilon$ is the equalizer of $(f \otimes 1_A) \cdot \Delta$ and $( g \ox 1_A) \cdot \Delta$ in $\C$. We can easily check that this construction is the equalizer of $f$ and $g$ in $\sf BiAlg_{\C,coc}$.

Via the definitions of products and equalizers in $\sf BiAlg_{\C,coc}$, we define the pullback in $\sf BiAlg_{\C,coc}$ of two morphisms of bialgebras $f \colon A \to B$ and $g \colon C \to B$ as follows

\[
\begin{tikzpicture}[descr/.style={fill=white},baseline=(current  bounding  box.center),xscale=1.3] 
\node (A) at (0,0) {$A \otimes_B C$};
\node (B) at (2.5,0) {$C$};
\node (A') at (0,-2) {$A$};
\node (B') at (2.5,-2) {$B$};
\path[->,font=\scriptsize]
(B) edge node[right] {$ g$}  (B')
(A.south) edge node[left] {$ p_A$} (A'.north)
(A'.east) edge node[below] {$f$} (B'.west)
(A.east) edge node[above] {$p_C$} (B.west)
;
\end{tikzpicture} \]
where $A \otimes_B C$ is the object in the following equalizer
\[ 
% textidote: ignore begin 
\begin{tikzpicture}[descr/.style={fill=white},baseline=(A.base),xscale=1.5] 
\node (A) at (-1,0) {$A \otimes C$};
\node (B) at (2.5,0) {$A \ox B \ox C $};
\node (C) at (-2.5,0) {$A \otimes_B C$};
\path[->,font=\scriptsize]
([yshift=-4pt]A.east) edge node[below] {$(1_A \ox f \ox 1_C) \cdot ( \Delta \ox 1_C) $} ([yshift=-4pt]B.west)
([yshift=0pt]C.east) edge node[above] {$\varepsilon$} ([yshift=0pt]A.west)
([yshift=4pt]A.east) edge node[above] {$(1_A \otimes g \ox 1_C) \cdot (1_A \otimes \Delta)  $} ([yshift=4pt]B.west)
;
\end{tikzpicture}.
% textidote: ignore end 
\]
and the two projections are defined as $p_A \coloneqq (1_A \otimes \epsilon_C)\cdot \varepsilon $ and $p_C \coloneqq (\epsilon_A \otimes 1_C) \cdot \varepsilon$.
\subsection{Split extensions of cocommutative bialgebras}

In \cite{Stercksplit}, we introduced a notion of split extensions of (non-associative) bialgebras and we showed several properties. Here, we recall this notion and some results in the case of cocommutative bialgebras that will be useful later on. 

\begin{definition}\label{definition split extension}
A \textit{split extension of cocommutative bialgebras} is given by a diagram \begin{equation}\label{split extension}
% textidote: ignore begin
\begin{tikzpicture}[descr/.style={fill=white},baseline=(A.base)] 
\node (A) at (0,0) {$A$};
\node (B) at (2.5,0) {$B$};
\node (C) at (-2.5,0) {$X$};
\path[dashed,->,font=\scriptsize]
([yshift=2pt]A.west) edge node[above] {$\lambda$} ([yshift=2pt]C.east);
\path[->,font=\scriptsize]
([yshift=-4pt]C.east) edge node[below] {$\kappa$} ([yshift=-4pt]A.west)
([yshift=-4pt]A.east) edge node[below] {$\alpha$} ([yshift=-4pt]B.west)
([yshift=2pt]B.west) edge node[above] {$e$} ([yshift=2pt]A.east);
\end{tikzpicture} ,
% textidote: ignore end 
\end{equation}
where $X$, $A$, $B$ are cocommutative bialgebras, $\kappa$, $\alpha$, $e$ are morphisms of bialgebras, such that
\begin{itemize}
\item[(1)] $\lambda \cdot \kappa = 1_X$, $\alpha \cdot e =1_B$ ,
\item[(2)] $\lambda \cdot e = u_X\cdot \epsilon_B$, $\alpha \cdot \kappa = u_B \cdot\epsilon_X$,
\item[(3)] $m \cdot ((\kappa\cdot \lambda) \otimes (e \cdot \alpha)) \cdot \Delta = 1_A$,
\item[(4)]$ \lambda \cdot m \cdot (\kappa \otimes e) =  1_X \otimes \epsilon_B$,
\item[(5)] $\lambda$ is a morphism of coalgebras preserving the unit.
\end{itemize} 
\end{definition}
We recall that the conditions $\lambda \cdot \kappa = 1_X$, $\lambda \cdot e = u_X \cdot \epsilon_B$ and the preservation of the unit by $\lambda$ are consequences of the axiom $(4)$. %Moreover, the condition $(7)$ follows from $(3)$, $(6)$ and $(8)$ (see Remark 2.5 in \cite{Sterck} for a detailed proof).
\begin{definition}\label{morph split ext}
A \textit{morphism of split extensions} from the split extension
% textidote: ignore begin
\begin{tikzpicture}[descr/.style={fill=white},baseline=(A.base),xscale=0.7] 
\node (A) at (0,0) {$A$};
\node (B) at (2.5,0) {$B$};
\node (C) at (-2.5,0) {$X$};
\path[dashed,->,font=\scriptsize]
([yshift=2pt]A.west) edge node[above] {$\lambda$} ([yshift=2pt]C.east);
\path[->,font=\scriptsize]
([yshift=-4pt]C.east) edge node[below] {$\kappa$} ([yshift=-4pt]A.west)
([yshift=-4pt]A.east) edge node[below] {$\alpha$} ([yshift=-4pt]B.west)
([yshift=2pt]B.west) edge node[above] {$e$} ([yshift=2pt]A.east);
\end{tikzpicture} 
 to the split extension
\begin{tikzpicture}[descr/.style={fill=white},baseline=(A.base),xscale=0.7] 
\node (A) at (0,0) {$A'$};
\node (B) at (2.5,0) {$B'$};
\node (C) at (-2.5,0) {$X'$};
\path[dashed,->,font=\scriptsize]
([yshift=2pt]A.west) edge node[above] {$\lambda'$} ([yshift=2pt]C.east);
\path[->,font=\scriptsize]
([yshift=-4pt]C.east) edge node[below] {$\kappa'$} ([yshift=-4pt]A.west)
([yshift=-4pt]A.east) edge node[below] {$\alpha'$} ([yshift=-4pt]B.west)
([yshift=2pt]B.west) edge node[above] {$e'$} ([yshift=2pt]A.east);
\end{tikzpicture} 
% textidote: ignore end
is given by 3 morphisms of bialgebras $g\colon B \rightarrow B'$, $v \colon X \rightarrow X'$ and $p \colon A \rightarrow A'$ such that the following diagram commutes \begin{center}
% textidote: ignore begin
\begin{tikzpicture}[descr/.style={fill=white},baseline=(A.base),xscale=0.7] 
\node (A) at (0,0) {$A$};
\node (B) at (2.5,0) {$B$};
\node (C) at (-2.5,0) {$X$};
\node (A') at (0,-2) {$A'$};
\node (B') at (2.5,-2) {$B'.$};
\node (C') at (-2.5,-2) {$X'$};
\path[dashed,->,font=\scriptsize]
([yshift=2pt]A.west) edge node[above] {$\lambda$} ([yshift=2pt]C.east)
([yshift=2pt]A'.west) edge node[above] {$\lambda'$} ([yshift=2pt]C'.east);
\path[->,font=\scriptsize]
(B.south) edge node[right] {$ g$}  (B'.north)
 (C.south) edge node[left] {$ v $}  (C'.north)
(A.south) edge node[left] {$ p$} (A'.north)
([yshift=-4pt]C'.east) edge node[below] {$\kappa'$} ([yshift=-4pt]A'.west)
([yshift=-4pt]A'.east) edge node[below] {$\alpha'$} ([yshift=-4pt]B'.west)
([yshift=2pt]B'.west) edge node[above] {$e'$} ([yshift=2pt]A'.east)
([yshift=-4pt]C.east) edge node[below] {$\kappa$} ([yshift=-4pt]A.west)
([yshift=-4pt]A.east) edge node[below] {$\alpha$} ([yshift=-4pt]B.west)
([yshift=2pt]B.west) edge node[above] {$e$} ([yshift=2pt]A.east);
\end{tikzpicture} 
% textidote: ignore end 
\end{center}
\end{definition}

The two definitions above give rise to the category $\mathsf{SplitExt(BiAlg_{\C,coc})}$ of split extensions of bialgebras. In this paper, we will denote this class of split extensions of cocommutative bialgebras by $S_{coc}$.

We recall a convenient equality. 
\begin{lemma}
Let 
% textidote: ignore begin
\begin{tikzpicture}[descr/.style={fill=white},baseline=(A.base)] 
\node (A) at (0,0) {$A$};
\node (B) at (2.5,0) {$B$};
\node (C) at (-2.5,0) {$X$};
\path[dashed,->,font=\scriptsize]
([yshift=2pt]A.west) edge node[above] {$\lambda$} ([yshift=2pt]C.east);
\path[->,font=\scriptsize]
([yshift=-4pt]C.east) edge node[below] {$\kappa$} ([yshift=-4pt]A.west)
([yshift=-4pt]A.east) edge node[below] {$\alpha$} ([yshift=-4pt]B.west)
([yshift=2pt]B.west) edge node[above] {$e$} ([yshift=2pt]A.east);
\end{tikzpicture} 
% textidote: ignore end
be a split extension of bialgebras, then the following identities hold,
%\begin{center}
%% textidote: ignore begin
%\begin{tikzpicture}[descr/.style={fill=white},baseline=(A.base),scale=0.85] 
%\node (A) at (0,0) {$ A$};
%\node (B) at (9,0) {$X$};
%\node (C) at (9,1.5) {$X^2 $};
%\node (D) at (9,3) {$ A^2$};
%\node (E) at (6,3) {$A^3$};
%\node (F) at (3,3) {$ A^3$};
%\node (G) at (0,3) {$ A^2 $};
%\path[->,font=\scriptsize]
%(G.south) edge node[descr] {$m$} (A.north)
%(D.south) edge node[descr] {$\lambda \ox \lambda$} (C.north)
%(C.south) edge node[descr] {$ m$} (B.north)
%(A.east) edge node[above] {$ \lambda$} (B.west)
%(E.east) edge node[above] {$1_A \ox m$} (D.west)
%(F.east) edge node[above] {$1_A \ox (e \cdot \alpha) \ox (\kappa \cdot \lambda)$} (E.west)
%(G.east) edge node[above] {$\Delta \ox 1_A$} (F.west)
%%(K.east) edge node[above] {$\act \ox 1_B$} (L.west)
%%(L.east) edge node[above] {$u_X \ox 1_X \ox 1_B \ox u_B$} (D.west)
%;
%\end{tikzpicture} 
%% textidote: ignore end
%\end{center}Given a split extension 
%$% textidote: ignore begin
%\begin{tikzpicture}[descr/.style={fill=white},baseline=(A.base)] 
%\node (A) at (0,0) {$A$};
%\node (B) at (2.5,0) {$B$};
%\node (C) at (-2.5,0) {$X$};
%\path[->,font=\scriptsize]
%([yshift=-4pt]C.east) edge node[below] {$\kappa$} ([yshift=-4pt]A.west)
%([yshift=2pt]A.west) edge node[above] {$\lambda$} ([yshift=2pt]C.east)
%([yshift=-4pt]A.east) edge node[below] {$\alpha$} ([yshift=-4pt]B.west)
%([yshift=2pt]B.west) edge node[above] {$e$} ([yshift=2pt]A.east);
%\end{tikzpicture} % textidote: ignore end
% $ 
%of bialgebras, we have
 \begin{equation}\label{lambda morph Prop 2.5}
 \lambda \cdot m = m \cdot (\lambda \otimes \lambda) \cdot (1_A \otimes m) \cdot (1_A \otimes (e \cdot \alpha) \ox (\kappa \ox \lambda)) \cdot (\Delta \otimes 1_A),
 \end{equation}
% \begin{equation}\label{Lemma 2.8}
% m \cdot (e \otimes \kappa) = m \cdot (\kappa \otimes e) \cdot (\act \otimes 1_B) \cdot (1_B \otimes \sigma) \cdot (\Delta \otimes 1_X),
% \end{equation}
 where $\act = \lambda \cdot m \cdot (e \ox \kappa)$.
 \end{lemma}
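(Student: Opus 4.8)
The plan is to deduce \eqref{lambda morph Prop 2.5} from the semidirect-product decomposition $1_A = m\cdot((\kappa\cdot\lambda)\ox(e\cdot\alpha))\cdot\Delta$ provided by axiom~(3) of Definition~\ref{definition split extension}, after first isolating two auxiliary identities describing how $\lambda$ behaves on a product one of whose two factors already lies in the image of $\kappa$, respectively of $e$: namely $\lambda\cdot m\cdot(\kappa\ox 1_A)=m\cdot(1_X\ox\lambda)$ and $\lambda\cdot m\cdot(1_A\ox e)=(1_X\ox\epsilon_B)\cdot(\lambda\ox 1_B)$.

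To prove the first of these I would expand the $A$-input using axiom~(3), reassociate the resulting triple product by \eqref{ass} so that the two occurrences of $\kappa$ become adjacent, merge them using that $\kappa$ is an algebra morphism, and then apply axiom~(4) in the form $\lambda\cdot m\cdot(\kappa\ox e)=1_X\ox\epsilon_B$; what is left collapses because $\epsilon_B\cdot\alpha=\epsilon_A$ (as $\alpha$ is a coalgebra morphism) together with counitality~\eqref{counital comultiplication}. The second identity is obtained in the mirror way: expand the $A$-input by axiom~(3), reassociate, merge the two occurrences of $e$ using that $e$ is an algebra morphism, apply axiom~(4), and conclude with $\epsilon_B\cdot m=\epsilon_B\ox\epsilon_B$ from \eqref{epsilon et m}, again together with $\epsilon_B\cdot\alpha=\epsilon_A$ and counitality.

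Granting these, the main computation is short. Starting from $\lambda\cdot m$, expand the \emph{first} factor of $m$ via axiom~(3) and reassociate by \eqref{ass} so that the outer multiplication carries $\kappa\cdot\lambda$ in its left leg; the first auxiliary identity then pulls $\lambda$ through and yields $\lambda\cdot m = m\cdot(\lambda\ox(\lambda\cdot m\cdot((e\cdot\alpha)\ox 1_A)))\cdot(\Delta\ox 1_A)$. Now expand the $A$-input of the inner block $\lambda\cdot m\cdot((e\cdot\alpha)\ox 1_A)$ by axiom~(3) and reassociate; the factor lying in the image of $e$ that appears at the far right is absorbed by the second auxiliary identity, and counitality~\eqref{counital comultiplication} rewrites the block as $\lambda\cdot m\cdot((e\cdot\alpha)\ox(\kappa\cdot\lambda))$, which equals $\act\cdot(\alpha\ox\lambda)$ by the definition $\act=\lambda\cdot m\cdot(e\ox\kappa)$. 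Substituting back gives precisely \eqref{lambda morph Prop 2.5} (equivalently, $\lambda\cdot m = m\cdot(\lambda\ox\act)\cdot(1_A\ox\alpha\ox\lambda)\cdot(\Delta\ox 1_A)$).

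The essential work, and the only place where care is really needed, lies in establishing the two auxiliary identities; after that the argument is a matter of keeping track of the comultiplication legs. I would also observe that cocommutativity is never used, so this identity holds for arbitrary (not necessarily cocommutative) bialgebras in $\C$, in accordance with its origin in \cite{Stercksplit}.
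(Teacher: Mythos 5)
Your argument is correct, and I have checked each step: both auxiliary identities $\lambda\cdot m\cdot(\kappa\ox 1_A)=m\cdot(1_X\ox\lambda)$ and $\lambda\cdot m\cdot(1_A\ox e)=(1_X\ox\epsilon_B)\cdot(\lambda\ox 1_B)$ follow exactly as you describe from axiom (3), associativity, the multiplicativity of $\kappa$ and $e$, axiom (4), $\epsilon_B\cdot\alpha=\epsilon_A$ and counitality, and the two-stage expansion of $\lambda\cdot m$ then yields $\lambda\cdot m=m\cdot(\lambda\ox\act)\cdot(1_A\ox\alpha\ox\lambda)\cdot(\Delta\ox 1_A)$, which is the stated identity once one reads the factor $(\kappa\ox\lambda)$ in \eqref{lambda morph Prop 2.5} as the composite $(\kappa\cdot\lambda)$ (a typo you have implicitly, and correctly, repaired). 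There is nothing in the paper to compare against: the lemma is merely recalled from \cite{Stercksplit} with no proof given here, so your derivation is a genuine addition rather than a variant of an existing argument. Your closing observation is also accurate --- neither the braiding nor \eqref{coco} enters the computation, so the identity only uses the split-extension axioms together with associativity and the bialgebra compatibilities, consistent with its origin in the more general setting of \cite{Stercksplit}.
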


Even if the category of cocommutative bialgebras is not protomodular (see \cite{GVdL}), we have an interesting result for the split extensions defined above: a relative form of the Split Short Five Lemma holds in $\sf BiAlg_{\C,coc}$. 

\begin{theorem}\label{Sec SSFL}
Let $(g,v,p)$ be a morphism of split extensions of bialgebras in a symmetric monoidal category $\C$ 
\begin{center}
\begin{tikzpicture}[descr/.style={fill=white}] 
\node (A) at (0,0) {$A$};
\node (B) at (2.5,0) {$B$};
\node (C) at (-2.5,0) {$X$};
\node (A') at (0,-2) {$A'$};
\node (B') at (2.5,-2) {$B'$};
\node (C') at (-2.5,-2) {$X'$};
\path[dashed,->,font=\scriptsize]
([yshift=2pt]A'.west) edge node[above] {$\lambda'$} ([yshift=2pt]C'.east)
([yshift=2pt]A.west) edge node[above] {$\lambda$} ([yshift=2pt]C.east)
;
\path[->,font=\scriptsize]
(B.south) edge node[right] {$ g$}  (B'.north)
 (C.south) edge node[left] {$ v $}  (C'.north)
(A.south) edge node[left] {$ p$} (A'.north)
([yshift=-4pt]C'.east) edge node[below] {$\kappa'$} ([yshift=-4pt]A'.west)
([yshift=-4pt]A'.east) edge node[below] {$\alpha'$} ([yshift=-4pt]B'.west)
([yshift=2pt]B'.west) edge node[above] {$e'$} ([yshift=2pt]A'.east)
([yshift=-4pt]C.east) edge node[below] {$\kappa$} ([yshift=-4pt]A.west)
([yshift=-4pt]A.east) edge node[below] {$\alpha$} ([yshift=-4pt]B.west)
([yshift=2pt]B.west) edge node[above] {$e$} ([yshift=2pt]A.east);
\end{tikzpicture} 
\end{center}
 then $p$ is an isomorphism whenever $v$ and $g$ are.
\end{theorem}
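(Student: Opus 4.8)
The plan is to produce an explicit two-sided inverse of $p$, guided by the ``semidirect-product'' decomposition recorded in axiom~$(3)$ of Definition~\ref{definition split extension}. First I would unwind the commutativity of the square in Definition~\ref{morph split ext}: it amounts to the four identities $p\cdot \kappa = \kappa' \cdot v$, $\alpha' \cdot p = g \cdot \alpha$, $p\cdot e = e' \cdot g$ and $\lambda' \cdot p = v\cdot \lambda$. Since $v$ and $g$ are isomorphisms of cocommutative bialgebras, these rewrite as
\[
p\cdot \kappa\cdot v^{-1} = \kappa', \qquad g^{-1}\cdot \alpha' \cdot p = \alpha, \qquad p\cdot e\cdot g^{-1} = e', \qquad v^{-1}\cdot \lambda' \cdot p = \lambda .
\]

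Next I would set $q \colon A' \to A$ to be
\[
q \;\coloneqq\; m_A\cdot\bigl((\kappa\cdot v^{-1}\cdot\lambda')\ox(e\cdot g^{-1}\cdot\alpha')\bigr)\cdot\Delta_{A'},
\]
which is a well-defined morphism of $\C$ because $\kappa,\lambda',e,\alpha',v^{-1},g^{-1}$ are morphisms of (co)algebras and $m_A,\Delta_{A'}$ are structure maps. Then I would check $p\cdot q = 1_{A'}$: using multiplicativity of $p$, that is $p\cdot m_A = m_{A'}\cdot(p\ox p)$, followed by $p\cdot\kappa\cdot v^{-1}=\kappa'$ and $p\cdot e\cdot g^{-1}=e'$, turns $p\cdot q$ into $m_{A'}\cdot\bigl((\kappa'\cdot\lambda')\ox(e'\cdot\alpha')\bigr)\cdot\Delta_{A'}$, which is $1_{A'}$ by axiom~$(3)$ applied to the bottom split extension. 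Symmetrically, $q\cdot p = 1_A$: using comultiplicativity of $p$, that is $\Delta_{A'}\cdot p = (p\ox p)\cdot\Delta_A$, and then $v^{-1}\cdot\lambda'\cdot p=\lambda$ and $g^{-1}\cdot\alpha'\cdot p=\alpha$, turns $q\cdot p$ into $m_A\cdot\bigl((\kappa\cdot\lambda)\ox(e\cdot\alpha)\bigr)\cdot\Delta_A$, which is $1_A$ by axiom~$(3)$ applied to the top split extension.

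At this point $p$ is an isomorphism of $\C$ with inverse $q$, and the remaining point — which I expect to be the only one needing a word of care — is to see that $q$ is a morphism of cocommutative bialgebras, so that $p$ is an isomorphism in $\sf BiAlg_{\C,coc}$. Cocommutativity is a property of the object $A$, and since $p$ is a morphism of bialgebras which, being an isomorphism of $\C$, is both monic and epic, the identities $p\cdot q = 1_{A'}$ and $q\cdot p = 1_A$ force $q$ to commute with $m$, $u$, $\Delta$ and $\epsilon$: for instance $p\cdot(q\cdot m_{A'}) = m_{A'} = p\cdot\bigl(m_A\cdot(q\ox q)\bigr)$ together with $p$ monic gives $q\cdot m_{A'}=m_A\cdot(q\ox q)$, and dually one cancels $p$ on the right using that $p$ is epic for the comultiplication, the unit and the counit being handled in the same way. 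Hence $q$ lies in $\sf BiAlg_{\C,coc}$ and $p$ is an isomorphism there.

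All the displayed computations are straightforward diagram chases, so I do not anticipate a genuine obstacle; the only real content is the formula for $q$, which is dictated by axiom~$(3)$, together with the observation that a two-sided inverse of a bialgebra morphism is automatically a bialgebra morphism. This is in line with the fact, already established in \cite{Stercksplit}, that the ``Split Short Five Lemma'' holds for this class of split extensions. I note that Lemma~\ref{lambda morph Prop 2.5} is not needed for the present statement; it will instead be used for the $S_{coc}$-protomodularity and centrality results in the later sections.
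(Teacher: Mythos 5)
Your proof is correct: the explicit inverse $q = m\cdot\bigl((\kappa\cdot v^{-1}\cdot\lambda')\ox(e\cdot g^{-1}\cdot\alpha')\bigr)\cdot\Delta$ is precisely the construction used in \cite{Stercksplit}, to which the paper defers for this theorem, and your verification via axiom~$(3)$ on both sides, followed by the observation that a two-sided inverse of a bialgebra morphism is automatically a bialgebra morphism, is the same argument. Nothing is missing.
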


In \cite{Stercksplit}, it was proved that there is an equivalence between the categories of actions and the one of split extensions of (cocommutative) bialgebras in a symmetric monoidal category. We recall this equivalence and the definition of the objects and morphisms of the category of actions of bialgebras.

\begin{definition}\label{def action}
Let $X$ and $B$ be cocommutative bialgebras in a symmetric monoidal category $(\mathcal{C}, \otimes, I, \sigma)$. An \textit{action of bialgebras} is a morphism in $\mathcal{C}$, $\act \colon B \otimes X \rightarrow X $, such that
\begin{align*}
&\triangleright \cdot (u_B \otimes 1_X)  = 1_X,\\
&\act \cdot (1_B \ox \act) = \act \cdot (m \ox 1_X) ,\\
& \triangleright \cdot (1_B \otimes u_X) =  u_X \cdot\epsilon_B,\\
&\act \cdot (1_B \ox m) = m \cdot (\act \ox \act) \cdot (1_B \ox \sigma \ox 1_X) \cdot (\Delta \ox 1_X \ox 1_X),\\
&\epsilon_X \cdot \triangleright = \epsilon_B \otimes \epsilon_X,\\
&\Delta \cdot  \triangleright = (\triangleright \otimes \triangleright) \cdot (1_B \otimes 
 \sigma \otimes 1_X) \cdot (\Delta \otimes \Delta).
\end{align*}
\end{definition}
In other words, an action of bialgebras of $B$ on $X$ is a cocommutative bialgebra in the symmetric monoidal category of $B$-modules, where $B$ is a cocommutative bialgebra.
\begin{definition}\label{morph action}
Let $\act \colon B \otimes X \rightarrow X$ and $\act' \colon B' \otimes X' \rightarrow X'$ be two actions of bialgebras. A \textit{morphism of actions of bialgebras} is defined as a pair of morphisms of bialgebras $g \colon B \rightarrow B'$ and $v \colon X \rightarrow X'$ such that  \[v \cdot \act = \act' \cdot (g \ox v).\]
\end{definition}
There is then the category of actions of cocommutative bialgebras that will be denoted by  $\sf Act(BiAlg_{\C,coc})$.

We recall the theorem obtained in \cite{Stercksplit} and the construction of the functor in the equivalence of categories between $\sf Act(BiAlg_{\C,coc})$ and $\sf SplitExt(BiAlg_{\C,coc})$.

\begin{theorem}\label{equi bialg}
There is an equivalence between the category $\mathsf{SplitExt(BiAlg_{\C,coc})}$ of split extensions of cocommutative bialgebras and the category $\mathsf{Act(BiAlg_{\C,coc})}$ of actions of cocommutative bialgebras.
\end{theorem}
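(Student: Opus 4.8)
The plan is to construct two functors and natural isomorphisms between them, in the spirit of the classical equivalence between group actions and split extensions.

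\medskip

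\noindent\textbf{Construction of the functor from split extensions to actions.}
First I would start from a split extension as in \eqref{split extension} and define the candidate action $\act \colon B \otimes X \to X$ by the formula $\act = \lambda \cdot m \cdot (e \otimes \kappa)$, which is exactly the morphism already singled out in Lemma~\ref{lambda morph Prop 2.5}. I would then verify the six axioms of Definition~\ref{def action} one by one. The unit axioms $\act \cdot (u_B \otimes 1_X) = 1_X$ and $\act \cdot (1_B \otimes u_X) = u_X \cdot \epsilon_B$ follow from axiom~(1), axiom~(2) and the fact that $e$, $\kappa$ are unital coalgebra morphisms. The counit compatibility $\epsilon_X \cdot \act = \epsilon_B \otimes \epsilon_X$ comes from $\lambda$, $m$, $e$, $\kappa$ being counit-preserving. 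The comultiplicativity $\Delta \cdot \act = (\act \otimes \act) \cdot (1_B \otimes \sigma \otimes 1_X) \cdot (\Delta \otimes \Delta)$ is where cocommutativity is used: since $\lambda$ is a coalgebra morphism (axiom~(5)) and $m$ satisfies \eqref{delta et m}, one expands and uses naturality of $\sigma$ together with \eqref{coco}. The associativity-type axiom $\act \cdot (1_B \otimes \act) = \act \cdot (m \otimes 1_X)$ is the most delicate and relies on \eqref{lambda morph Prop 2.5} applied to rewrite $\lambda \cdot m$, followed by axioms~(3) and~(4) to collapse the redundant factors; I expect this to be the main computational obstacle. Finally $\act \cdot (1_B \otimes m) = m \cdot (\act \otimes \act) \cdot (1_B \otimes \sigma \otimes 1_X) \cdot (\Delta \otimes 1_X \otimes 1_X)$ again uses \eqref{lambda morph Prop 2.5} together with the bialgebra axiom for $\kappa$. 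On morphisms, a morphism of split extensions $(g,v,p)$ is sent to $(g,v)$; one checks $v \cdot \act = \act' \cdot (g \otimes v)$ directly from the commutativity of the defining square, using $v \cdot \lambda = \lambda' \cdot p$ (which follows from $p \cdot \kappa = \kappa' \cdot v$, $\alpha' \cdot p = g \cdot \alpha$ and axiom~(3)).

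\medskip

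\noindent\textbf{Construction of the functor from actions to split extensions.}
Conversely, given an action $\act \colon B \otimes X \to X$, I would build the semidirect-product bialgebra $X \rtimes B$ on the object $X \otimes B$, with multiplication $m_{X \rtimes B} = (m_X \otimes m_B) \cdot (1_X \otimes \act \otimes 1_B \otimes 1_B) \cdot (1_X \otimes 1_B \otimes \sigma \otimes 1_B) \cdot (1_X \otimes \Delta_B \otimes 1_X \otimes 1_B)$ (the usual crossed-product formula), unit $u_X \otimes u_B$, comultiplication $(1_X \otimes \sigma \otimes 1_B)\cdot(\Delta_X \otimes \Delta_B)$ and counit $\epsilon_X \otimes \epsilon_B$. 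I would check that this is a cocommutative bialgebra (associativity of $m_{X\rtimes B}$ uses the two ``associativity'' action axioms; \eqref{delta et m} uses the comultiplicativity action axiom and cocommutativity of $X$, $B$). Then $\kappa = 1_X \otimes u_B$, $\alpha = \epsilon_X \otimes 1_B$, $e = u_X \otimes 1_B$ and $\lambda = 1_X \otimes \epsilon_B$ give a diagram, and I would verify axioms~(1)--(5) of Definition~\ref{definition split extension}; for instance axiom~(4) becomes $\act \cdot (1_B \otimes u_X) \otimes \dots$ collapsing to $1_X \otimes \epsilon_B$ via the unit action axioms. On morphisms, $(g,v)$ goes to $(g,v,v\otimes g)$.

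\medskip

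\noindent\textbf{The natural isomorphisms.}
Finally I would show the two composites are naturally isomorphic to the identity functors. Starting from an action and going back, the recovered action is $\lambda \cdot m_{X\rtimes B}\cdot(e\otimes\kappa) = (1_X\otimes\epsilon_B)\cdot m_{X\rtimes B}\cdot((u_X\otimes 1_B)\otimes(1_X\otimes u_B))$, which unwinds to $\act$ using the unit axioms; so this composite is literally the identity on objects, and trivially so on morphisms. For the other direction, starting from a split extension \eqref{split extension} and forming $X \rtimes B$ from the induced action, the comparison morphism $A \to X \rtimes B$ is $(\lambda \otimes \alpha)\cdot\Delta_A$ with inverse $m \cdot (\kappa \otimes e)$; that these are mutually inverse bialgebra morphisms compatible with $\kappa,\alpha,e,\lambda$ is exactly what axioms~(3) and~(4) encode, and naturality in $(g,v,p)$ is a short diagram chase. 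I expect the verification that $(\lambda\otimes\alpha)\cdot\Delta_A$ is multiplicative — which forces one to invoke \eqref{lambda morph Prop 2.5} once more — to be the second-hardest point after the associativity axiom above; everything else is a bookkeeping exercise in string diagrams. Assembling these functors and natural isomorphisms yields the claimed equivalence.
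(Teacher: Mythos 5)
Your proposal is correct and follows essentially the same route as the paper: the paper's proof (recalling the construction from \cite{Stercksplit}) defines exactly the same pair of functors, with the action $\act = \lambda \cdot m \cdot (e \otimes \kappa)$ in one direction and the semidirect product $X \rtimes B$ with the same structure maps $i_1, i_2, \pi_1, \pi_2$ in the other. Your additional sketch of the axiom verifications and of the comparison isomorphism $(\lambda \otimes \alpha)\cdot\Delta_A$ with inverse $m\cdot(\kappa\otimes e)$ matches the details carried out in the cited reference.
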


\begin{proof}
The functor $ F \colon \mathsf{SplitExt(BiAlg_{\C,coc})} \rightarrow \mathsf{Act(BiAlg_{\C,coc})}$ is defined as

% textidote: ignore begin
\[ F \left( 
\begin{tikzpicture}[descr/.style={fill=white},baseline=(O.base)] 
\node (O) at (1,-1) {$ $};
\node (A) at (0,0) {$A$};
\node (B) at (2.5,0) {$B$};
\node (C) at (-2.5,0) {$X$};
\node (A') at (0,-2) {$A'$};
\node (B') at (2.5,-2) {$B'$};
\node (C') at (-2.5,-2) {$X'$};
\path[dashed,->,font=\scriptsize]
([yshift=2pt]A'.west) edge node[above] {$\lambda'$} ([yshift=2pt]C'.east)
([yshift=2pt]A.west) edge node[above] {$\lambda$} ([yshift=2pt]C.east)
;
\path[->,font=\scriptsize]
(C.south) edge node[left] {$ v$} (C'.north)
(B.south) edge node[left] {$ g$} (B'.north)
(A.south) edge node[left] {$ p$} (A'.north)
([yshift=-4pt]C'.east) edge node[below] {$\kappa'$} ([yshift=-4pt]A'.west)
([yshift=-4pt]A'.east) edge node[below] {$\alpha'$} ([yshift=-4pt]B'.west)
([yshift=2pt]B'.west) edge node[above] {$e'$} ([yshift=2pt]A'.east)
([yshift=-4pt]C.east) edge node[below] {$\kappa$} ([yshift=-4pt]A.west)
([yshift=-4pt]A.east) edge node[below] {$\alpha$} ([yshift=-4pt]B.west)
([yshift=2pt]B.west) edge node[above] {$e$} ([yshift=2pt]A.east);
\end{tikzpicture} 
\right) =
\begin{tikzpicture}[descr/.style={fill=white},baseline=(O.base)] 
\node (E) at (4,0) {$B \otimes X$};
\node (E') at (4,-2) {$B' \otimes X'$};
\node (F) at (6,0) {$X$};
\node (F') at (6,-2) {$X',$};
\path[->,font=\scriptsize]
(E.east) edge node[above] {$\act$} (F.west)
(E'.east) edge node[above] {$\act'$} (F'.west)
(E.south) edge node[descr] {$ g \otimes v$} (E'.north)
(F.south) edge node[left] {$ v$} (F'.north)
;
\end{tikzpicture} 
  \]
% textidote: ignore end
where $\act \colon = \lambda \cdot m \cdot (e \ox \kappa)$. 
The functor $G \colon \mathsf{Act(BiAlg_{\C,coc})} \rightarrow \mathsf{SplitExt(BiAlg_{\C,coc})}$ is defined as
\[ 
G \left( 
\begin{tikzpicture}[descr/.style={fill=white},baseline=(O.base)] 
\node (O) at (-5,-1) {$ $};
\node (E) at (-6.5,0) {$B \otimes X$};
\node (E') at (-6.5,-2) {$B' \otimes X'$};
\node (F) at (-4.5,0) {$X$};
\node (F') at (-4.5,-2) {$X'$};
\path[->,font=\scriptsize]
(E.east) edge node[above] {$\act$} (F.west)
(E'.east) edge node[above] {$\act'$} (F'.west)
(E.south) edge node[descr] {$ g \otimes v$} (E'.north)
(F.south) edge node[left] {$ v$} (F'.north);
\end{tikzpicture} 
 \right) = 
\begin{tikzpicture}[descr/.style={fill=white},baseline=(O.base)] 
\node (O) at (-1,-1) {$ $};
\node (A) at (0,0) {$X \rtimes B$};
\node (B) at (2.5,0) {$B$};
\node (C) at (-2.5,0) {$X$};
\node (A') at (0,-2) {$X' \rtimes B'$};
\node (B') at (2.5,-2) {$B',$};
\node (C') at (-2.5,-2) {$X'$};
\path[dashed,->,font=\scriptsize]
([yshift=2pt]A'.west) edge node[above] {$\pi_1'$} ([yshift=2pt]C'.east)
([yshift=2pt]A.west) edge node[above] {$\pi_1$} ([yshift=2pt]C.east);
\path[->,font=\scriptsize]
(C.south) edge node[left] {$ v$} (C'.north)
(B.south) edge node[left] {$ g$} (B'.north)
(A.south) edge node[left] {$ v \otimes  g$} (A'.north)
([yshift=-4pt]C'.east) edge node[below] {$i_1'$} ([yshift=-4pt]A'.west)
([yshift=-4pt]A'.east) edge node[below] {$\pi_2'$} ([yshift=-4pt]B'.west)
([yshift=2pt]B'.west) edge node[above] {$i_2'$} ([yshift=2pt]A'.east)
([yshift=-4pt]C.east) edge node[below] {$i_1$} ([yshift=-4pt]A.west)
([yshift=-4pt]A.east) edge node[below] {$\pi_2$} ([yshift=-4pt]B.west)
([yshift=2pt]B.west) edge node[above] {$i_2$} ([yshift=2pt]A.east);
\end{tikzpicture} 
% textidote: ignore end
  \]
  where $i_1 = 1_X \otimes u_B$, $i_2 = u_X \otimes 1_B $, $\pi_1 = 1_X \otimes \epsilon_B$, $\pi_2= \epsilon_X \otimes 1_B$ and $X \rtimes B$ is the object $X \ox B$ where the bialgebra structure is given by the following morphisms of $\mathcal{C}$
\begin{align*}
m_{X \rtimes B} &= (m \otimes m) \cdot (1_X \otimes \triangleright \otimes 1_B \otimes 1_B) \cdot (1_X \otimes 1_B \otimes \sigma \otimes 1_B) \cdot (1_X \otimes \Delta \otimes 1_X \otimes 1_B)\\
u_{X \rtimes B} &=u_X \otimes u_B,\\
\Delta_{X \rtimes B} &= (1_X \otimes \sigma \otimes 1_B) \cdot (\Delta \otimes \Delta),\\
\epsilon_{X \rtimes B}&= \epsilon_X \otimes \epsilon_B.
\end{align*}
\end{proof}

Let $B$ and $X$ be two cocommutative bialgebras, since the trivial action $\epsilon_B \ox 1_X \colon B \ox X \to X$ is an action of bialgebras as defined in Definition \ref{def action}, we obtain the following corollary.

\begin{corollary}\label{product projection}
Le $B$ and $X$ be two cocommutative bialgebras, the projection $\pi_2$ of the product 
\[ \begin{tikzpicture}[descr/.style={fill=white},baseline=(A.base)] 
\node (A) at (0,0) {$X \ox B$};
\node (B) at (2.5,0) {$B$};
\node (C) at (-2.5,0) {$X$};
\path[->,font=\scriptsize]
([yshift=2pt]A.west) edge node[above] {$\pi_1$} ([yshift=2pt]C.east);
\path[->,font=\scriptsize]
([yshift=-4pt]C.east) edge node[below] {$i_1$} ([yshift=-4pt]A.west)
([yshift=-4pt]A.east) edge node[below] {$\pi_2$} ([yshift=-4pt]B.west)
([yshift=2pt]B.west) edge node[above] {$i_2$} ([yshift=2pt]A.east);
\end{tikzpicture}  \]
where $i_1 = 1_X \otimes u_B$, $i_2 = u_X \otimes 1_B $, $\pi_1 = 1_X \otimes \epsilon_B$ and $\pi_2= \epsilon_X \otimes 1_B$, belongs to $S_{coc}$.
\end{corollary}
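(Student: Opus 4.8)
The plan is to recognise $\pi_2$ as the split-epimorphism part of a split extension produced by the functor $G$ of Theorem~\ref{equi bialg}, applied to the trivial action. As the statement already indicates, the first ingredient is that $\act_{\mathrm{tr}} \coloneqq \epsilon_B \ox 1_X \colon B \ox X \to X$ is an action of cocommutative bialgebras in the sense of Definition~\ref{def action}; I would verify the six axioms one at a time, each one reducing to a (co)unit or bialgebra identity already available for $X$ and $B$. Concretely, the two unitality axioms use $\epsilon_B \cdot u_B = 1_I$ from \eqref{epsilon et u}; the two ``distributivity'' axioms use $\epsilon_B \cdot m = \epsilon_B \ox \epsilon_B$ from \eqref{epsilon et m} together with $(\epsilon_B \ox \epsilon_B)\cdot\Delta_B = \epsilon_B$, itself a consequence of the counit axiom \eqref{counital comultiplication}; the counit-compatibility axiom is immediate; and the comultiplicativity axiom follows from the same identity $(\epsilon_B \ox \epsilon_B)\cdot\Delta_B = \epsilon_B$. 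Hence $\act_{\mathrm{tr}}$ is an object of $\mathsf{Act(BiAlg_{\C,coc})}$.

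Next I would evaluate $G$ on $\act_{\mathrm{tr}}$. The crucial observation is that the semidirect product $X \rtimes B$ attached to the trivial action degenerates to the plain tensor-product bialgebra: substituting $\act = \epsilon_B \ox 1_X$ into
\[ m_{X \rtimes B} = (m \ox m)\cdot(1_X \ox \act \ox 1_B \ox 1_B)\cdot(1_X \ox 1_B \ox \sigma \ox 1_B)\cdot(1_X \ox \Delta \ox 1_X \ox 1_B) \]
and using the counit axiom \eqref{counital comultiplication} for $B$ to absorb the resulting $\epsilon_B$ against one leg of $\Delta_B$, one obtains $m_{X \rtimes B} = (m \ox m)\cdot(1_X \ox \sigma \ox 1_B)$, which is exactly the multiplication of the categorical product $X \ox B$ of Proposition~\ref{tensor product}; the maps $u_{X \rtimes B}$, $\Delta_{X \rtimes B}$, $\epsilon_{X \rtimes B}$ are already in tensor-product form. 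Consequently $G(\act_{\mathrm{tr}})$ is precisely the split extension with underlying maps $X \overset{i_1}{\longrightarrow} X \ox B \overset{\pi_2}{\longrightarrow} B$, section $i_2 = u_X \ox 1_B$ of $\pi_2$, retraction $\pi_1 = 1_X \ox \epsilon_B$ of $i_1$ playing the role of $\lambda$, and $i_1 = 1_X \ox u_B$, $\pi_2 = \epsilon_X \ox 1_B$ — that is, exactly the diagram in the statement.

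Finally, since $G$ lands in $\mathsf{SplitExt(BiAlg_{\C,coc})}$ and $S_{coc}$ is by definition the class of all such split extensions, the diagram above belongs to $S_{coc}$; this is the assertion. The only mildly delicate point is the degeneration of $m_{X \rtimes B}$ to the tensor-product multiplication, where one has to keep track of the two braidings $\sigma$ occurring in the definition of $m_{X \rtimes B}$ and apply the counit axiom in the correct tensor factor; this is a short string-diagram computation and presents no genuine obstacle, the remainder being bookkeeping.
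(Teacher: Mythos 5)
Your argument is exactly the paper's: the paper derives this corollary in one line by observing that the trivial action $\epsilon_B \ox 1_X$ satisfies Definition~\ref{def action} and then applying the functor $G$ of Theorem~\ref{equi bialg}, whose semidirect product degenerates to the categorical product $X \ox B$. Your proposal simply fills in the routine verifications (the action axioms and the collapse of $m_{X \rtimes B}$ to the tensor-product multiplication), all of which are correct.
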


\section{$S_{coc}$-protomodularity}
It is known that the category of cocommutative bialgebras is not protomodular. However, we have interesting results with respect to the class $S_{coc}$ of split extensions of cocommutative bialgebras, as the Split Short Five Lemma (Theorem \ref{Sec SSFL}) and the equivalence with the actions (Theorem \ref{equi bialg}). 

These results are the motivation to prove that the category is $S_{coc}$-protomodular, which means protomodular with respect to $S_{coc}$.

Let $\C$ be a pointed category, and $S$ be a class of split epimorphisms with their kernels (also called points), to recall the definition of an $S$-protomodular category we give the definition of a strong point. 

\begin{definition}
A split epimophism wiht its kernel  \begin{tikzpicture}[descr/.style={fill=white},baseline=(A.base),xscale=0.7] 
\node (A) at (0,0) {$A$};
\node (B) at (2.5,0) {$B$};
\node (C) at (-2.5,0) {$X$};
%\path[dashed,->,font=\scriptsize]
%([yshift=2pt]A.west) edge node[above] {$\lambda$} ([yshift=2pt]C.east);
\path[->,font=\scriptsize]
(C.east) edge node[above] {$\kappa$} (A.west)
([yshift=-4pt]A.east) edge node[below] {$\alpha$} ([yshift=-4pt]B.west)
([yshift=2pt]B.west) edge node[above] {$e$} ([yshift=2pt]A.east);
\end{tikzpicture} is called a \emph{strong point} whenever the kernel $\kappa$ and the splitting $e$ are \emph{jointly strongly epimorphic}. 
\end{definition}

\begin{definition}\label{stronglyepic}
The morphisms $v \colon X \to A$ and $w \colon B \to A$ are \emph{jointly strongly epimorphic} if for any morphism $\mu : M \to A$ that factors through $v$ and $w$, $\mu$ is an isomorphism.
\begin{center} \begin{tikzpicture}[descr/.style={fill=white},baseline=(A.base)] 
\node (A) at (0,0) {$A$};
\node (B) at (3,0) {$B$};
\node (C) at (-3,0) {$X$};
\node (D) at (-0,-1.5) {$M$};
\path[->,font=\scriptsize]
(C.east) edge node[above] {$ v $} (A.west)
(C.south east) edge node[left,xshift=-10pt] {$ \delta $} (D.north west)
(B.south west) edge node[right,xshift=10pt] {$\gamma$} (D.north east);
\path[->,font=\scriptsize]
(B.west) edge node[above] {$w$} (A.east);
\path[>->,font=\scriptsize]
(D.north) edge node[right] {$\mu$} (A.south);
\end{tikzpicture}\end{center}
\end{definition}

\begin{definition}\cite{BMS}
Let $\mathcal{C}$ be a pointed finitely complete category and $S$ a class of split epimorphisms stable under pullbacks. $\mathcal{C}$ is said to be \emph{$S$-protomodular} when the class $S$ is closed under finite limits and any point in $S$ is a strong point. 
\end{definition}

\subsection{Pullback stable}

We prove that the class $S_{coc}$ of split extensions of cocommutative bialgebras is pullback stable. 

We consider the pullback of \begin{tikzpicture}[descr/.style={fill=white},baseline=(A.base),xscale=0.7] 
\node (A) at (0,0) {$A$};
\node (B) at (2.5,0) {$B$};
\node (C) at (-2.5,0) {$X$};
\path[dashed,->,font=\scriptsize]
([yshift=2pt]A.west) edge node[above] {$\lambda$} ([yshift=2pt]C.east);
\path[->,font=\scriptsize]
([yshift=-4pt]C.east) edge node[below] {$\kappa$} ([yshift=-4pt]A.west)
([yshift=-4pt]A.east) edge node[below] {$\alpha$} ([yshift=-4pt]B.west)
([yshift=2pt]B.west) edge node[above] {$e$} ([yshift=2pt]A.east);
\end{tikzpicture} along the morphism of bialgebras $g \colon C \to B$.

\begin{equation}\label{pullback}
\begin{tikzpicture}[descr/.style={fill=white},baseline=(current  bounding  box.center),xscale=1.3] 
\node (A) at (0,0) {$A \otimes_B C$};
\node (B) at (2.5,0) {$C$};
\node (C) at (-2.5,0) {$X$};
\node (A') at (0,-2) {$A$};
\node (B') at (2.5,-2) {$B$};
\node (C') at (-2.5,-2) {$X$};
\path[dashed,->,font=\scriptsize]
([yshift=2pt]A'.west) edge node[above] {$\lambda$} ([yshift=2pt]C'.east)
([yshift=2pt]A.west) edge node[above] {$\lambda \cdot p_A$} ([yshift=2pt]C.east)
;
\path[->,font=\scriptsize]
(B) edge node[right] {$ g$}  (B')
 (C.south) edge node[left] {$ 1_X$}  (C'.north)
(A.south) edge node[left] {$ p_A$} (A'.north)
([yshift=-4pt]C'.east) edge node[below] {$\kappa$} ([yshift=-4pt]A'.west)
([yshift=-4pt]A'.east) edge node[below] {$\alpha$} ([yshift=-4pt]B'.west)
([yshift=2pt]B'.west) edge node[above] {$e$} ([yshift=2pt]A'.east)
([yshift=-4pt]C.east) edge node[below] {$(\kappa , u_C \cdot \epsilon_X) $} ([yshift=-4pt]A.west)
([yshift=-4pt]A.east) edge node[below] {$p_C$} ([yshift=-4pt]B.west)
([yshift=2pt]B.west) edge node[above] {$(e \cdot g , 1_C)$} ([yshift=2pt]A.east);
\end{tikzpicture} 
 \end{equation} 
where $(\kappa, u_C \cdot \epsilon_X)$ and $(e \cdot g,1_C)$ are the morphisms induced by the universal property of the pullback. 

\begin{proposition}\label{pullbackstable}
Let \eqref{pullback} be the pullback of \begin{tikzpicture}[descr/.style={fill=white},baseline=(A.base),xscale=0.7] 
\node (A) at (0,0) {$A$};
\node (B) at (2.5,0) {$B$};
\node (C) at (-2.5,0) {$X$};
\path[dashed,->,font=\scriptsize]
([yshift=2pt]A.west) edge node[above] {$\lambda$} ([yshift=2pt]C.east);
\path[->,font=\scriptsize]
([yshift=-4pt]C.east) edge node[below] {$\kappa$} ([yshift=-4pt]A.west)
([yshift=-4pt]A.east) edge node[below] {$\alpha$} ([yshift=-4pt]B.west)
([yshift=2pt]B.west) edge node[above] {$e$} ([yshift=2pt]A.east);
\end{tikzpicture} along a morphism of bialgebras $g \colon C \to B$, then the upper row of \eqref{pullback}:
\begin{equation*}
\begin{tikzpicture}[descr/.style={fill=white},baseline=(current  bounding  box.center),scale=1.3] 
\node (A) at (0,0) {$A \otimes_B C$};
\node (B) at (2.5,0) {$C$};
\node (C) at (-2.5,0) {$X$};
\path[dashed,->,font=\scriptsize]
([yshift=2pt]A.west) edge node[above] {$\lambda \cdot p_A$} ([yshift=2pt]C.east)
;
\path[->,font=\scriptsize]
([yshift=-4pt]C.east) edge node[below] {$(\kappa , u_C \cdot \epsilon_X) $} ([yshift=-4pt]A.west)
([yshift=-4pt]A.east) edge node[below] {$p_C$} ([yshift=-4pt]B.west)
([yshift=2pt]B.west) edge node[above] {$(e \cdot g , 1_C)$} ([yshift=2pt]A.east);
\end{tikzpicture},
 \end{equation*} 
belongs to $S_{coc}$.
\end{proposition}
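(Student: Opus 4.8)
The plan is to check directly that the upper row of \eqref{pullback}, equipped with the structure maps $\kappa'\coloneqq(\kappa,\,u_C\cdot\epsilon_X)$, $\alpha'\coloneqq p_C$, $e'\coloneqq(e\cdot g,\,1_C)$ and $\lambda'\coloneqq\lambda\cdot p_A$ displayed there, satisfies the five axioms of Definition~\ref{definition split extension}. I would first record the elementary facts used throughout. The morphisms $\kappa'$ and $e'$ are morphisms of bialgebras: they are the morphisms induced into the pullback $A\otimes_B C$ of bialgebras by morphisms of bialgebras, the required compatibilities $\alpha\cdot\kappa=g\cdot(u_C\cdot\epsilon_X)$ and $\alpha\cdot e\cdot g=g$ being immediate from axioms~(1)--(2) of the original split extension together with $g\cdot u_C=u_B$. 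The projections $p_A,p_C$ are morphisms of bialgebras and are \emph{jointly monomorphic}, being the projections of a pullback; moreover $p_A\cdot\kappa'=\kappa$, $p_C\cdot\kappa'=u_C\cdot\epsilon_X$, $p_A\cdot e'=e\cdot g$, $p_C\cdot e'=1_C$, and $g\cdot p_C=\alpha\cdot p_A$. Finally, $g$ being a morphism of bialgebras gives $\epsilon_B\cdot g=\epsilon_C$, and $\lambda$ being a morphism of coalgebras (axiom~(5) of the original split extension) gives $\epsilon_X\cdot\lambda=\epsilon_A$.

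With these in hand, axioms~(1), (2) and (5) for the upper row are immediate: for instance $\lambda'\cdot\kappa'=\lambda\cdot p_A\cdot\kappa'=\lambda\cdot\kappa=1_X$, $\alpha'\cdot e'=p_C\cdot e'=1_C$, $\lambda'\cdot e'=\lambda\cdot e\cdot g=u_X\cdot\epsilon_B\cdot g=u_X\cdot\epsilon_C$, $\alpha'\cdot\kappa'=u_C\cdot\epsilon_X$, and $\lambda'=\lambda\cdot p_A$ is a morphism of coalgebras preserving the unit because $\lambda$ and $p_A$ are. Axiom~(4) is a one-line computation: since $p_A$ is a morphism of bialgebras,
\[
\lambda'\cdot m_{A\otimes_B C}\cdot(\kappa'\otimes e')
=\lambda\cdot m_A\cdot(\kappa\otimes e)\cdot(1_X\otimes g)
=(1_X\otimes\epsilon_B)\cdot(1_X\otimes g)=1_X\otimes\epsilon_C,
\]
using axiom~(4) of the original split extension and $\epsilon_B\cdot g=\epsilon_C$.

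The axiom requiring a small idea is (3). Write $\mu\coloneqq m_{A\otimes_B C}\cdot\big((\kappa'\cdot\lambda')\otimes(e'\cdot p_C)\big)\cdot\Delta_{A\otimes_B C}$; we must show $\mu=1_{A\otimes_B C}$. Rather than computing with the explicit equalizer presentation of $A\otimes_B C$ and the induced (co)multiplication on it, I would use that $p_A$ and $p_C$ are jointly monomorphic, so it suffices to prove $p_A\cdot\mu=p_A$ and $p_C\cdot\mu=p_C$. For the first, pushing $p_A$ (a morphism of bialgebras) through $m_{A\otimes_B C}$ and $\Delta_{A\otimes_B C}$ and using $p_A\cdot\kappa'=\kappa$, $p_A\cdot e'=e\cdot g$ and $g\cdot p_C=\alpha\cdot p_A$, one reduces $p_A\cdot\mu$ to $m_A\cdot\big((\kappa\cdot\lambda)\otimes(e\cdot\alpha)\big)\cdot\Delta_A\cdot p_A$, which equals $p_A$ by axiom~(3) of the original split extension. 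For the second, using $p_C\cdot\kappa'=u_C\cdot\epsilon_X$, $\epsilon_X\cdot\lambda=\epsilon_A$, $\epsilon_A\cdot p_A=\epsilon_{A\otimes_B C}$ and $p_C\cdot e'=1_C$, one reduces $p_C\cdot\mu$ to $m_C\cdot\big((u_C\cdot\epsilon_{A\otimes_B C})\otimes p_C\big)\cdot\Delta_{A\otimes_B C}$, which collapses to $p_C$ by the unit and counit axioms. Hence $\mu=1_{A\otimes_B C}$, which is axiom~(3).

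The main obstacle is therefore axiom~(3): the point is to avoid wrestling with the equalizer defining $A\otimes_B C$ and the somewhat involved multiplication it carries, which is achieved by exploiting the joint monicity of the pullback projections, after which everything collapses to the axioms of the original split extension and the (co)unit laws. As a sanity check and an alternative route, one may observe that via the equivalence of Theorem~\ref{equi bialg} the upper row of \eqref{pullback} should correspond to the image under $G$ of the restricted action $\act\cdot(g\otimes 1_X)\colon C\otimes X\to X$, once one verifies that this is again an action of cocommutative bialgebras and that $A\otimes_B C\cong X\rtimes C$; but the direct verification above is the shorter path and the one I would write up.
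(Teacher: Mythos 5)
Your overall strategy coincides with the paper's: conditions (1), (2), (4) and (5) are routine, and condition (3) is attacked by composing with the pullback projections $p_A$ and $p_C$; your two computations $p_A\cdot\mu=p_A$ and $p_C\cdot\mu=p_C$ are exactly the content of the diagram the paper displays for this step.

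There is, however, a genuine gap in how you close the argument for condition (3). You conclude $\mu=1_{A\otimes_B C}$ from the bare statement that ``$p_A$ and $p_C$ are jointly monomorphic, being the projections of a pullback.'' That pullback is a pullback in $\mathsf{BiAlg_{\C,coc}}$, so its projections are jointly monic only with respect to pairs of \emph{bialgebra} morphisms; they are certainly not jointly monic with respect to arbitrary morphisms of $\C$ (already for vector spaces, $1_A\otimes\epsilon_C$ and $\epsilon_A\otimes 1_C$ are far from jointly injective on $A\otimes C$). Your
$\mu=m_{A\otimes_B C}\cdot\bigl((\kappa'\cdot\lambda')\otimes(e'\cdot p_C)\bigr)\cdot\Delta_{A\otimes_B C}$
is a convolution-type composite and is \emph{not} a priori a morphism of bialgebras (it need not be an algebra map), so the joint monicity you invoke does not apply as stated. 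The paper closes precisely this hole: since the forgetful functor of the adjunction \eqref{adj} preserves limits, $p_A$ and $p_C$ are also jointly monic in $\mathsf{CoAlg_{\C,coc}}$, and cocommutativity guarantees that $\mu$ \emph{is} a morphism of coalgebras; only then do the identities $p_A\cdot\mu=p_A$ and $p_C\cdot\mu=p_C$ force $\mu=1_{A\otimes_B C}$. Adding these two observations (joint monicity at the coalgebra level via \eqref{adj}, and $\mu$ being a coalgebra morphism by cocommutativity) repairs your proof; the remaining verifications you give are correct.
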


\begin{proof}
The conditions $(1)$, $(2)$, $(4)$ and $(5)$ of Definition \ref{split extension} are easily checked. We give the details of the condition $(3)$. To show that \[m_{A \otimes_B C} \cdot ((\kappa , u_C \cdot \epsilon_X) \cdot \lambda \cdot p_A  \ox(e \cdot g , 1_C) \cdot p_C ) \cdot \Delta_{A \otimes_B C} = 1_{A \otimes_B C},\] we compose the two sides of the equality with $p_A$ and $p_C$. 

\ctikzfig{piA}

Since $p_A$ and $p_C$ are jointly monic in $\sf BiAlg_{\C,coc}$ (and then also jointly monic in $\sf CoAlg_{\C,coc}$ thanks to the adjunction \eqref{adj}) and the cocommutativity implies that $m_{A \otimes_B C} \cdot ((\kappa , u_C \cdot \epsilon_X) \cdot \lambda \cdot p_A  \ox(e \cdot g , 1_C) \cdot p_C ) \cdot \Delta_{A \otimes_B C}$ is a morphism of coalgebras, we can conclude that the condition $(3)$ holds.
% \item The following equalities show the condition $(4)$ in the defintion of split extensions of bialgebras \begin{align*}
% \lambda \cdot \pi_A \cdot m \cdot ( (\kappa , u_C \cdot \epsilon_X) \ox(e \cdot g , 1_C)) &= \lambda \cdot  m \cdot ( \pi_A \cdot (\kappa , u_C \cdot \epsilon_X) \ox \pi_A \cdot (e \cdot g , 1_C))\\
% &= \lambda \cdot  m \cdot (  \kappa \ox e \cdot g )\\
% &= \lambda \cdot  m \cdot (  \kappa \ox e) \cdot (1_X \ox g )\\
% &= (1_X \cdot \epsilon_B) \cdot (1_X \ox g )\\
% &= (1_X \cdot \epsilon_C) 
% \end{align*}
%\item Morevoer, it is clear that $\lambda \cdot \pi_A$ is a morphism of coalgebras.
%\end{enumerate}
Hence, the class $S_{coc}$ of split extensions of cocommutative bialgebras is stable under pullbacks.
\end{proof}

\subsection{Closure under finite limits}

To prove that the split extensions in $S_{coc}$ are closed under finite limits, we prove that they are closed under products and equalizers. 

\begin{proposition}\label{closedlimit}
The class $S_{coc}$ of split extensions of bialgebras is closed under finite limits. 
\end{proposition}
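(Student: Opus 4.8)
The plan is to reduce ``closed under finite limits'' to the two generating cases, finite products and equalizers, since any finite limit in a finitely complete category is built from these (and the terminal object $0$, for which the split extension $0 \to 0 \to 0$ trivially lies in $S_{coc}$). Concretely, a split extension is an object of $\mathsf{SplitExt(BiAlg_{\C,coc})}$, and what we must show is that the full subcategory on extensions belonging to $S_{coc}$ is closed under products and equalizers computed in the functor-like category of all split extensions; equivalently, that if each ``leg'' of a finite diagram of split extensions lies in $S_{coc}$, then the limiting split extension (computed termwise in $\sf BiAlg_{\C,coc}$, using Proposition \ref{tensor product} and the equalizer formula \eqref{equalizer}) satisfies conditions $(1)$--$(5)$ of Definition \ref{definition split extension}.

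First I would treat \textbf{products}. Given split extensions $(\kappa_j,\alpha_j,e_j,\lambda_j)$ with data $(X_j,A_j,B_j)$ for $j$ in a finite index set, form the componentwise product; by Proposition \ref{tensor product} the product of cocommutative bialgebras is the monoidal product, so the candidate extension is $(X_1\ox X_2, A_1 \ox A_2, B_1 \ox B_2)$ with $\kappa = \kappa_1 \ox \kappa_2$, $\alpha = \alpha_1 \ox \alpha_2$, $e = e_1 \ox e_2$, $\lambda = \lambda_1 \ox \lambda_2$. Conditions $(1)$, $(2)$, $(5)$ are immediate from functoriality of $\ox$ and the corresponding identities for each factor. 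Condition $(4)$, $\lambda \cdot m \cdot (\kappa \ox e) = 1 \ox \epsilon$, and condition $(3)$, $m \cdot ((\kappa\cdot\lambda)\ox(e\cdot\alpha))\cdot\Delta = 1$, require using \eqref{delta et m} to split the comultiplication and multiplication of the tensor bialgebra through the braiding; after shuffling the braidings (using naturality and that $\C$ is symmetric), each side reduces to the tensor of the corresponding identities for the factors. This is routine but I would write out the braiding bookkeeping for $(3)$, as in the proof of Proposition \ref{pullbackstable}: it suffices to check the identity after composing with the projections, and cocommutativity guarantees the relevant composite is a coalgebra morphism so that jointly-monic projections detect the equality.

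Next, \textbf{equalizers}. Given two morphisms of split extensions $(g,v,p), (g',v',p') \colon (\kappa,\alpha,e,\lambda) \rightrightarrows (\kappa',\alpha',e',\lambda')$, the equalizer is computed termwise: $X'' = \mathrm{eq}(v,v')$, $A'' = \mathrm{eq}(p,p')$, $B'' = \mathrm{eq}(g,g')$, with induced maps $\kappa'', \alpha'', e'', \lambda''$ obtained from the universal property of these equalizers (using that $\kappa,\alpha,e,\lambda$ commute with both parallel pairs, which is exactly what it means for $(g,v,p)$ and $(g',v',p')$ to be morphisms of split extensions — note $\lambda'' $ exists because $\lambda$ is at least a coalgebra morphism, cf.\ condition $(5)$, and the equalizer \eqref{equalizer} in $\sf BiAlg_{\C,coc}$ is created in $\C$, so equalizers of coalgebra morphisms behave well). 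The key point is that the inclusion $\varepsilon'' \colon A'' \to A$ is a (regular) monomorphism of bialgebras, hence by the adjunction \eqref{adj} a monomorphism of coalgebras; so to verify conditions $(1)$--$(5)$ for the equalizer extension it suffices to postcompose with $\varepsilon''$ (and the analogous monos on $X'',B''$) and use that the identities already hold upstairs for $(\kappa,\alpha,e,\lambda)$ together with the defining commutativities of $\varepsilon''$ with the structure maps. For condition $(3)$ one again notes the composite $m_{A''}\cdot((\kappa''\lambda'')\ox(e''\alpha''))\cdot\Delta_{A''}$ is a coalgebra endomorphism of $A''$ that becomes the identity after composing with the mono $\varepsilon''$, hence is the identity.

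The \textbf{main obstacle} I anticipate is condition $(3)$ in the product case: it is the one axiom that genuinely mixes the algebra and coalgebra structures through the braiding, so the ``shuffle'' of tensor factors has to be done carefully, and as in Proposition \ref{pullbackstable} the clean way to do it is not by brute force but by the coalgebra-morphism-plus-jointly-monic-projections trick, reducing to the factorwise statement. Everything else ($(1),(2),(4),(5)$ for products; all five for equalizers) is formal, following either from functoriality of $\ox$ or from the universal property of equalizers together with the monomorphism observation from \eqref{adj}. Having checked products and equalizers, the proposition follows since these generate all finite limits.
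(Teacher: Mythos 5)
Your proposal is correct and follows essentially the same route as the paper: reduce to products and equalizers, construct both limits componentwise, verify the axioms for the product via the tensor/braiding bookkeeping (with the coalgebra-morphism-plus-jointly-monic-maps trick handling condition (3)), and for the equalizer use that the comparison maps $\varepsilon$, $\varepsilon'$, $\hat{\varepsilon}$ are monomorphisms of bialgebras, hence of coalgebras by the adjunction \eqref{adj}, so that the identities can be checked after postcomposition. The paper likewise singles out condition (3) for products and condition (4) for equalizers as the only ones requiring an explicit diagram, exactly as you anticipated.
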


\begin{proof}
Let \begin{tikzpicture}[descr/.style={fill=white},baseline=(A.base)] 
\node (A) at (0,0) {$A$};
\node (B) at (2.5,0) {$B$};
\node (C) at (-2.5,0) {$X$};
\path[dashed,->,font=\scriptsize]
([yshift=2pt]A.west) edge node[above] {$\lambda$} ([yshift=2pt]C.east);
\path[->,font=\scriptsize]
([yshift=-4pt]C.east) edge node[below] {$\kappa$} ([yshift=-4pt]A.west)
([yshift=-4pt]A.east) edge node[below] {$\alpha$} ([yshift=-4pt]B.west)
([yshift=2pt]B.west) edge node[above] {$e$} ([yshift=2pt]A.east);
\end{tikzpicture}  and \begin{tikzpicture}[descr/.style={fill=white},baseline=(A.base)] 
\node (A) at (0,0) {$A'$};
\node (B) at (2.5,0) {$B'$};
\node (C) at (-2.5,0) {$X'$};
\path[dashed,->,font=\scriptsize]
([yshift=2pt]A.west) edge node[above] {$\lambda'$} ([yshift=2pt]C.east);
\path[->,font=\scriptsize]
([yshift=-4pt]C.east) edge node[below] {$\kappa'$} ([yshift=-4pt]A.west)
([yshift=-4pt]A.east) edge node[below] {$\alpha'$} ([yshift=-4pt]B.west)
([yshift=2pt]B.west) edge node[above] {$e'$} ([yshift=2pt]A.east);
\end{tikzpicture}  two split extensions of bialgebras. We can prove that 
\begin{equation}\label{productdiag}
\begin{tikzpicture}[descr/.style={fill=white},baseline=(A.base),xscale=1.4] 
\node (A) at (0,0) {$A \ox A'$};
\node (B) at (2.5,0) {$B \ox B'$};
\node (C) at (-2.5,0) {$X \ox X'$};
\path[dashed,->,font=\scriptsize]
([yshift=2pt]A.west) edge node[above] {$\lambda \ox \lambda'$} ([yshift=2pt]C.east);
\path[->,font=\scriptsize]
([yshift=-4pt]C.east) edge node[below] {$\kappa \ox \kappa'$} ([yshift=-4pt]A.west)
([yshift=-4pt]A.east) edge node[below] {$\alpha \ox \alpha'$} ([yshift=-4pt]B.west)
([yshift=2pt]B.west) edge node[above] {$e\ox e'$} ([yshift=2pt]A.east);
\end{tikzpicture}  
\end{equation} 
belongs to $S_{coc}$. Since the construction is made component wise it is clear that \eqref{productdiag}  belongs to $S_{coc}$ as we can explicitly see in the proof of condition (3) via the commutativity of the following diagram 
\ctikzfig{product}
The other conditions hold via similar computations
%\begin{align*}
%&m_{A\ox A'} \cdot \left( (\kappa \ox \kappa') \cdot (\lambda \ox \lambda') \ox (e \ox e') \cdot (\alpha \ox \alpha')\right) \cdot \Delta_{A\ox A'} \\
%&= (m \ox m) \cdot (1_A \ox \sigma \ox 1_A) \cdot \left( \kappa \cdot \lambda \ox \kappa' \cdot  \lambda' \ox e  \cdot \alpha \ox e' \cdot  \alpha'\right) \cdot (1_A \ox \sigma \ox 1_A) \cdot (\Delta \ox \Delta) \\
%&= (m \ox m)  \cdot \left( \kappa \cdot \lambda  \ox e  \cdot \alpha \ox \kappa' \cdot  \lambda' \ox e' \cdot  \alpha'\right) \cdot (\Delta \ox \Delta) \\
%&= 1_{A\ox A'}
%\end{align*} 

The class $S_{coc}$ is also stable under equalizers. We construct the equalizer of two morphisms of split extensions of bialgebras $(g,v,p)$ and $(g',v',p')$.

\begin{equation*}
\begin{tikzpicture}[descr/.style={fill=white},baseline=(current  bounding  box.center),scale=1.1] 
\node (A) at (0,0) {$A'$};
\node (B) at (2.5,0) {$B'$};
\node (C) at (-2.5,0) {$X'$};
\node (A') at (0,-2) {$A$};
\node (B') at (2.5,-2) {$B$};
\node (C') at (-2.5,-2) {$X$};
\node (A'') at (0,2) {$E$};
\node (B'') at (2.5,2) {$E'$};
\node (C'') at (-2.5,2) {$\hat{E}$};
\path[dashed,->,font=\scriptsize]
([yshift=2pt]A'.west) edge node[above] {$\lambda$} ([yshift=2pt]C'.east)
([yshift=2pt]A.west) edge node[above] {$\lambda'$} ([yshift=2pt]C.east)
([yshift=2pt]A''.west) edge node[above] {$\tilde{\lambda}$} ([yshift=2pt]C''.east)
;
\path[->,font=\scriptsize]
(C'') edge node[left] {$ \hat{\varepsilon}$} (C)
(A'') edge node[left] {$ \varepsilon$} (A)
(B'') edge node[left] {$ \varepsilon'$} (B)
([xshift=-4pt]A.south) edge node[left] {$ g$} ([xshift=-4pt]A'.north)
([xshift=4pt]A.south) edge node[right] {$ g'$} ([xshift=4pt]A'.north)
([xshift=-4pt]C.south) edge node[left] {$ v$} ([xshift=-4pt]C'.north)
([xshift=4pt]C.south) edge node[right] {$ v'$} ([xshift=4pt]C'.north)
([xshift=-4pt]B.south) edge node[left] {$ p$} ([xshift=-4pt]B'.north)
([xshift=4pt]B.south) edge node[right] {$ p'$} ([xshift=4pt]B'.north)
([yshift=-4pt]C''.east) edge node[below] {$\tilde{\kappa}$} ([yshift=-4pt]A''.west)
([yshift=-4pt]A''.east) edge node[below] {$\tilde{\alpha}$} ([yshift=-4pt]B''.west)
([yshift=2pt]B''.west) edge node[above] {$\tilde{e}$} ([yshift=2pt]A''.east)
([yshift=-4pt]C'.east) edge node[below] {$\kappa$} ([yshift=-4pt]A'.west)
([yshift=-4pt]A'.east) edge node[below] {$\alpha$} ([yshift=-4pt]B'.west)
([yshift=2pt]B'.west) edge node[above] {$e$} ([yshift=2pt]A'.east)
([yshift=-4pt]C.east) edge node[below] {$\kappa'$} ([yshift=-4pt]A.west)
([yshift=-4pt]A.east) edge node[below] {$\alpha'$} ([yshift=-4pt]B.west)
([yshift=2pt]B.west) edge node[above] {$e'$} ([yshift=2pt]A.east);
\end{tikzpicture} 
\end{equation*}
where $\tilde{\alpha}$, $\tilde{e}$, $\tilde{\kappa}$ and $\tilde{\lambda}$ are induced by the universal properties of the equalizers. Note that $\tilde{\lambda}$ is induced by the universal property of $\hat{\varepsilon}$ seen as a coequalizer in $\sf CoAlg_{\C,coc}$ via \eqref{adj}. 
By using the fact that $\varepsilon, \varepsilon'$ and $\hat{\varepsilon}$ are monomorphisms of bialgebras (and hence also of coalgebras) we can conclude that 
\begin{equation}\label{equalizer}
\begin{tikzpicture}[descr/.style={fill=white},baseline=(current  bounding  box.center),scale=1.3] 
\node (A'') at (0,2) {$E$};
\node (B'') at (2.5,2) {$E'$};
\node (C'') at (-2.5,2) {$\hat{E}$};
\path[dashed,->,font=\scriptsize]
([yshift=2pt]A''.west) edge node[above] {$\tilde{\lambda}$} ([yshift=2pt]C''.east)
;
\path[->,font=\scriptsize]
([yshift=2pt]B''.west) edge node[above] {$\tilde{e}$} ([yshift=2pt]A''.east)
([yshift=-4pt]C''.east) edge node[below] {$\tilde{\kappa}$} ([yshift=-4pt]A''.west)
([yshift=-4pt]A''.east) edge node[below] {$\tilde{\alpha}$} ([yshift=-4pt]B''.west);
\end{tikzpicture} 
\end{equation}
belongs to $S_{coc}$. We give an explicit proof of the condition (4) via the commutativity of this diagram:
\ctikzfig{equalizer}
%\begin{align*}
%\hat{eq} \cdot \tilde{\lambda} \cdot m \cdot (\tilde{\kappa} \ox \tilde{e}) &= \lambda' \cdot eq \cdot  m \cdot (\tilde{\kappa} \ox \tilde{e}) \\
%&= \lambda' \cdot  m \cdot (eq \cdot \tilde{\kappa} \ox eq \cdot \tilde{e}) \\
%&= \lambda' \cdot  m \cdot (\kappa' \ox e') \cdot (\hat{eq} \ox eq' ) \\
%&= (1_{X'} \ox \epsilon) \cdot (\hat{eq} \ox eq' ) \\
%&= \hat{eq} \cdot (1_{\hat{E}} \ox \epsilon_{E'})
%\end{align*}
Since $\hat{\varepsilon}$ is a monomorphism of coalgebras and $\tilde{\lambda}\cdot m \cdot (\tilde{\kappa} \otimes \tilde{e})$ is a coalgebra morphism, we can conclude that \eqref{equalizer} satisfies condition (4).
\end{proof}

\subsection{Strong points}

It was proven in \cite{Stercksplit} that for a split extension of (cocommutative) bialgebras 
\[ \begin{tikzpicture}[descr/.style={fill=white},baseline=(A.base)] 
\node (A) at (0,0) {$A$};
\node (B) at (2.5,0) {$B$};
\node (C) at (-2.5,0) {$X$};
\path[dashed,->,font=\scriptsize]
([yshift=2pt]A.west) edge node[above] {$\lambda$} ([yshift=2pt]C.east);
\path[->,font=\scriptsize]
([yshift=-4pt]C.east) edge node[below] {$\kappa$} ([yshift=-4pt]A.west)
([yshift=-4pt]A.east) edge node[below] {$\alpha$} ([yshift=-4pt]B.west)
([yshift=2pt]B.west) edge node[above] {$e$} ([yshift=2pt]A.east);
\end{tikzpicture}  \]
$\kappa$ and $e$ are jointly epimorphic. Now we prove that for cocommutative biagebras, $\kappa$ and $e$ are jointly strongly epimorphic and hence any split extension of cocommutative bialgebras is a strong point. 

\begin{proposition}\label{strong}
Any split extension of cocommutative bialgebras is a strong point. 
\end{proposition}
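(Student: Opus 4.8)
The plan is to unwind Definition~\ref{stronglyepic} and to exploit axiom~$(3)$ of Definition~\ref{split extension}, which already presents $1_A$ as a morphism factoring through $\kappa$ and $e$. So I would start from a monomorphism $\mu \colon M \to A$ in $\sf BiAlg_{\C,coc}$ together with morphisms of bialgebras $\delta \colon X \to M$ and $\gamma \colon B \to M$ satisfying $\kappa = \mu \cdot \delta$ and $e = \mu \cdot \gamma$, and I would set
\[ r \coloneqq m_M \cdot ((\delta \cdot \lambda) \otimes (\gamma \cdot \alpha)) \cdot \Delta_A \colon A \to M .\]
Using that $\mu$ is a morphism of algebras, one then computes
\[ \mu \cdot r = m_A \cdot (\mu \otimes \mu) \cdot ((\delta \cdot \lambda) \otimes (\gamma \cdot \alpha)) \cdot \Delta_A = m_A \cdot ((\kappa \cdot \lambda) \otimes (e \cdot \alpha)) \cdot \Delta_A = 1_A ,\]
the last equality being exactly axiom~$(3)$; so $r$ is a section of $\mu$.

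The only delicate point, and where the actual work lies, is that $\lambda$ is merely a morphism of coalgebras (axiom~$(5)$ of Definition~\ref{split extension}), so $r$ need not be a morphism of bialgebras, and the identity above only exhibits $\mu$ as a split epimorphism in $\sf CoAlg_{\C,coc}$. To get around this I would first observe that $r$ is nonetheless a morphism of coalgebras, being a composite of such: $\Delta_A \colon A \to A \otimes A$ is a morphism of coalgebras because $A$ is cocommutative, $m_M \colon M \otimes M \to M$ is one by the bialgebra axioms, and $\delta$, $\gamma$, $\lambda$, $\alpha$ are all morphisms of coalgebras. Since $\mu$ is a monomorphism in $\sf BiAlg_{\C,coc}$, it is one in $\sf CoAlg_{\C,coc}$ by the adjunction~\eqref{adj}; as a morphism that is at once a monomorphism and a split epimorphism is an isomorphism, $\mu$ is an isomorphism in $\sf CoAlg_{\C,coc}$, with inverse $r$.

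It then remains to promote this to an isomorphism in $\sf BiAlg_{\C,coc}$, i.e.\ to check that the coalgebra morphism $r$ also respects the multiplications and units. Here I would precompose with $\mu$: from $\mu \cdot r = 1_A$ and the fact that $\mu$ is a morphism of algebras one gets
\begin{align*}
\mu \cdot (r \cdot m_A) &= m_A = m_A \cdot (\mu \otimes \mu) \cdot (r \otimes r) = \mu \cdot (m_M \cdot (r \otimes r)), \\
\mu \cdot (r \cdot u_A) &= u_A = \mu \cdot u_M,
\end{align*}
and, as $\mu$ is monic in $\sf CoAlg_{\C,coc}$ and all the morphisms involved are morphisms of coalgebras, this forces $r \cdot m_A = m_M \cdot (r \otimes r)$ and $r \cdot u_A = u_M$. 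Thus $r$ is a morphism of bialgebras, $\mu$ is an isomorphism in $\sf BiAlg_{\C,coc}$, and hence $\kappa$ and $e$ are jointly strongly epimorphic; that is, every split extension of cocommutative bialgebras is a strong point. The conceptual input is just axiom~$(3)$; the obstacle to watch is the bookkeeping required to pass to $\sf CoAlg_{\C,coc}$ and back, which is what renders harmless the fact that $\lambda$ is not a morphism of bialgebras.
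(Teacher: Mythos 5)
Your proof is correct, but it takes a genuinely different route from the paper's. The paper keeps the factorization $\kappa=\mu\cdot\delta$, $e=\mu\cdot\gamma$ inside the split-extension framework: it assembles the commutative ladder with $\mu$ as the middle vertical arrow and $1_X$, $1_B$ on the sides, verifies that the upper row $(\delta,\ \alpha\cdot\mu,\ \gamma,\ \lambda\cdot\mu)$ satisfies all the axioms of Definition \ref{definition split extension} (the delicate one being axiom $(3)$, checked by cancelling $\mu$ as a monomorphism of coalgebras against a coalgebra morphism, exactly as you do), and then concludes that $\mu$ is an isomorphism by the relative Split Short Five Lemma (Theorem \ref{Sec SSFL}) applied to the morphism $(1_B,1_X,\mu)$. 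You bypass Theorem \ref{Sec SSFL} entirely: axiom $(3)$ hands you the explicit section $r=m_M\cdot((\delta\cdot\lambda)\otimes(\gamma\cdot\alpha))\cdot\Delta_A$ of $\mu$, and the rest is careful bookkeeping --- $r$ is a coalgebra morphism (here cocommutativity is needed for $\Delta_A$ to be one, and the bialgebra axiom for $m_M$), a monomorphism admitting a section is invertible, and the inverse of an invertible bialgebra morphism is again a bialgebra morphism --- all of which you carry out correctly, including the point that $\mu$ may only be cancelled against coalgebra morphisms. Your argument is shorter and self-contained, using only axiom $(3)$, but it is in effect an inlined special case of the proof of Theorem \ref{Sec SSFL}; the paper's version is longer yet yields the extra information that the restricted point over $M$ again belongs to $S_{coc}$, which fits the reuse of machinery elsewhere in the section.
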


\begin{proof}
Let \begin{tikzpicture}[descr/.style={fill=white},baseline=(A.base)] 
\node (A) at (0,0) {$A$};
\node (B) at (2.5,0) {$B$};
\node (C) at (-2.5,0) {$X$};
\path[dashed,->,font=\scriptsize]
([yshift=2pt]A.west) edge node[above] {$\lambda$} ([yshift=2pt]C.east);
\path[->,font=\scriptsize]
([yshift=-4pt]C.east) edge node[below] {$\kappa$} ([yshift=-4pt]A.west)
([yshift=-4pt]A.east) edge node[below] {$\alpha$} ([yshift=-4pt]B.west)
([yshift=2pt]B.west) edge node[above] {$e$} ([yshift=2pt]A.east);
\end{tikzpicture} be a split extension of cocommutative bialgebras and $\mu \colon M \to A$ be a monomorphism of bialgebras. Let $\kappa$ and $e$ factor through $\mu$: 

\begin{center} \begin{tikzpicture}[descr/.style={fill=white},baseline=(A.base)] 
\node (A) at (0,0) {$A$};
\node (B) at (3,0) {$B$};
\node (C) at (-3,0) {$X$};
\node (D) at (-0,-1.5) {$M$};
\path[->,font=\scriptsize]
(C.east) edge node[above] {$ \kappa $} (A.west)
(C.south east) edge node[left,xshift=-10pt] {$ \delta $} (D.north west)
(B.south west) edge node[right,xshift=10pt] {$\gamma$} (D.north east);
\path[->,font=\scriptsize]
(B.west) edge node[above] {$e$} (A.east);
\path[>->,font=\scriptsize]
(D.north) edge node[right] {$\mu$} (A.south);
\end{tikzpicture}\end{center}

We can form the following commutative diagram 

\begin{equation*}
\begin{tikzpicture}[descr/.style={fill=white},baseline=(current  bounding  box.center),scale=1.1] 
\node (A) at (0,0) {$M$};
\node (B) at (2.5,0) {$B$};
\node (C) at (-2.5,0) {$X$};
\node (A') at (0,-2) {$A$};
\node (B') at (2.5,-2) {$B$};
\node (C') at (-2.5,-2) {$X$};
\path[dashed,->,font=\scriptsize]
([yshift=2pt]A'.west) edge node[above] {\normalsize $\lambda$} ([yshift=2pt]C'.east)
([yshift=2pt]A.west) edge node[above] {$\lambda \cdot \mu$} ([yshift=2pt]C.east)
;
\path[->,font=\scriptsize]
(B) edge node[right] {$ 1_B$}  (B')
 (C.south) edge node[left] {$ 1_X$}  (C'.north)
(A.south) edge node[left] {$ \mu$} (A'.north)
([yshift=-4pt]C'.east) edge node[below] {$\kappa$} ([yshift=-4pt]A'.west)
([yshift=-4pt]A'.east) edge node[below] {$\alpha$} ([yshift=-4pt]B'.west)
([yshift=2pt]B'.west) edge node[above] {$e$} ([yshift=2pt]A'.east)
([yshift=-4pt]C.east) edge node[below] {$\delta$} ([yshift=-4pt]A.west)
([yshift=-4pt]A.east) edge node[below] {$\alpha \cdot \mu$} ([yshift=-4pt]B.west)
([yshift=2pt]B.west) edge node[above] {$\gamma$} ([yshift=2pt]A.east);
\end{tikzpicture} 
 \end{equation*} 
 
 We prove that the upper row is in the class $S_{coc}$ by checking all the conditions of Definition \ref{split extension}. In particular, via the commutativity of the following diagram
 \ctikzfig{mu}
 
% \begin{enumerate}
%\item $ \alpha \cdot \mu \cdot \gamma = \alpha \cdot e = 1_B $
%
%\item $ \alpha \cdot \mu \cdot \delta = \alpha \cdot \kappa = u_B \cdot \epsilon_X $
%\item We have the following equalities
%\begin{align*}
%&\mu \cdot m \cdot ( \delta \cdot \lambda \cdot \mu \ox \gamma \cdot \alpha \cdot \mu  ) \cdot \Delta \\
%&=  m \cdot ( \mu \cdot \delta \cdot \lambda \cdot \mu \ox \mu \cdot\gamma \cdot \alpha \cdot \mu  ) \cdot \Delta \\
%&= m \cdot ( \kappa \cdot \lambda \cdot \mu \ox e \cdot \alpha \cdot \mu  ) \cdot \Delta \\
%&= m \cdot ( \kappa \cdot \lambda \ox e \cdot \alpha  ) \cdot \Delta \cdot \mu\\
%&= \mu.
%\end{align*}
we conclude that \begin{tikzpicture}[descr/.style={fill=white},baseline=(current  bounding  box.center),scale=1.1] 
\node (A) at (0,0) {$M$};
\node (B) at (2.5,0) {$B$};
\node (C) at (-2.5,0) {$X$};
\path[dashed,->,font=\scriptsize]
([yshift=2pt]A.west) edge node[above] {$\lambda \cdot \mu$} ([yshift=2pt]C.east)
;
\path[->,font=\scriptsize]
([yshift=-4pt]C.east) edge node[below] {$\delta$} ([yshift=-4pt]A.west)
([yshift=-4pt]A.east) edge node[below] {$\alpha \cdot \mu$} ([yshift=-4pt]B.west)
([yshift=2pt]B.west) edge node[above] {$\gamma$} ([yshift=2pt]A.east);
\end{tikzpicture} satisfies condition $(3)$ since $\mu$ is a monomorphism of bialgebras (and then also a monomorphism of coalgebras \eqref{adj}) and $m \cdot ( \delta \cdot \lambda \cdot \mu \ox \gamma \cdot \alpha \cdot \mu  ) \cdot \Delta$ is a coalgebra morphism thanks to the cocommutativity.
%\item We prove the condition $(4)$ via the following equalities
%\begin{align*} \lambda \cdot \mu \cdot m \cdot (\delta \ox \gamma) &= \lambda \cdot  m \cdot (\mu \cdot \delta \ox \mu \cdot \gamma) \\
% &= \lambda \cdot  m \cdot (\kappa \ox e) \\
% &= 1_X \ox \epsilon_B
%\end{align*}
%\item Morevoer it is clear that $\lambda \cdot \mu$ is a coalgebra morphism.
% \end{enumerate}
\end{proof}

Thanks to the previous results we obtain the following theorem:

\begin{theorem}\label{thmproto}
The category of cocommutative bialgebras in any symmetric monoidal category is an $S_{coc}$-protomodular category with respect to the class $S_{coc}$ of split extensions of cocommutative bialgebras defined in Definition \ref{split extension}.
\end{theorem}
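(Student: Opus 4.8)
The plan is to verify directly the three defining requirements of $S_{coc}$-protomodularity, each of which has essentially been isolated in the preceding subsections. First I would record that $\mathsf{BiAlg}_{\C,coc}$ is a pointed, finitely complete category: the unit object $I$ with its canonical (co)algebra structure is a zero object — a bialgebra morphism into $I$ is forced to be the counit and a bialgebra morphism out of $I$ to be the unit — and finite limits exist since $\C$ has equalizers, binary products being the monoidal product by Proposition \ref{tensor product}, equalizers being built as recalled in the preliminaries, and pullbacks following from these. I would also observe that each split extension in $S_{coc}$ genuinely determines a point in the sense required by the definition: by conditions $(1)$ and $(2)$ of Definition \ref{split extension}, $\alpha$ is a split epimorphism with splitting $e$, and $\kappa$ is its kernel, as established in \cite{Stercksplit}.

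Next I would invoke Proposition \ref{pullbackstable}, which shows that $S_{coc}$ is stable under pullback along an arbitrary morphism of cocommutative bialgebras; this supplies the pullback-stability hypothesis on the class $S$. Then I would invoke Proposition \ref{closedlimit}, which shows that $S_{coc}$ is closed under finite limits (products and equalizers of split extensions, computed componentwise), together with Proposition \ref{strong}, which shows that every split extension of cocommutative bialgebras is a strong point, that is, that $\kappa$ and $e$ are jointly strongly epimorphic. Combining these facts with the definition of an $S$-protomodular category — pointedness and finite completeness of $\mathsf{BiAlg}_{\C,coc}$, pullback-stability of $S_{coc}$, closure of $S_{coc}$ under finite limits, and every point in $S_{coc}$ being strong — the theorem follows at once.

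I do not expect a genuine obstacle at this last step, since all the substantive work is packaged into Propositions \ref{pullbackstable}, \ref{closedlimit} and \ref{strong}, and the present argument is pure assembly. The one place where care has to be taken — and which is already dealt with in those propositions — is the recurring interplay between the adjunction \eqref{adj} (so that monomorphisms, and jointly monic families, of bialgebras remain such at the level of coalgebras) and cocommutativity (so that the composites appearing in conditions $(3)$ and $(4)$ of a split extension are coalgebra morphisms); this is precisely what makes both the strong-point property and the stability of the split-extension axioms under limits go through, and nothing further is needed here.
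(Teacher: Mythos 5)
Your proposal is correct and follows exactly the paper's own argument: the proof is the assembly of Propositions \ref{pullbackstable}, \ref{closedlimit} and \ref{strong} against the definition of an $S$-protomodular category. The extra remarks you include (pointedness with $I$ as zero object, finite completeness from the recalled constructions of products, equalizers and pullbacks, and that each split extension in $S_{coc}$ is a point) are sound and merely make explicit what the paper leaves implicit.
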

\begin{proof}
This theorem holds thanks to Proposition \ref{pullbackstable}, Proposition \ref{closedlimit} and Proposition \ref{strong}.
\end{proof}

\begin{remark}
This theorem generalizes the result of \cite{BMS} saying that the category of monoids is $S$-protomodular with respect to the class of Schreier split epimorphisms. Indeed, if $\C$ is the symmetric monoidal category $\sf (Set, \times, \{ \star \})$, then $S_{coc}$ becomes the class of the Schreier split epimorphisms and $\sf BiAlg_{Set,coc}$ the category of monoids.
\end{remark}
\section{Huq commutator}

In this section, we recall the notion of centrality in the sense of Huq in a pointed category with binary products. Moreover, we give an explicit description of this centrality for two subbialgebras of a cocommutative bialgebra.

Let $\mathcal{C}$ be a pointed category with binary products, we say that two subobjects $x \colon X \to A$ and $y \colon Y \to A$ commute (or centralize) in the sense of Huq \cite{Huq} if there exists a morphism $p \colon X \times Y \to A$ such that the following diagram commutes
\begin{equation*}
\begin{tikzpicture}[descr/.style={fill=white},yscale=0.9,xscale=1.1]
\node (A) at (0,0) {$A$};
\node (C) at (2,2) {$Y$};
\node (D) at (0,2) {$X \times Y$};
\node (B) at (-2,2) {$X$};
  \path[-stealth]
 (B.south) edge node[left] {${x\,} $} (A.north west) 
 (C.south) edge node[right] {${\, y} $} (A.north east) 
 (B.east) edge node[above] {$(1,0)$} (D.west) 
 (C.west) edge node[above] {$(0,1)$} (D.east);
   \path[-stealth,dashed]
 (D.south) edge node[right] {$p$} (A.north);
\end{tikzpicture}
\end{equation*}
Then we denote by $[X,Y] = 0$ the fact that the subobjects $X$ and $Y$ commute.

Note that in $\sf BiAlg_{\C,coc}$, if it exists, $p$ is unique thanks to Corollary \ref{product projection} and Proposition \ref{strong}.
\begin{proposition}
In $\sf BiAlg_{\C,coc}$, the following are equivalent for $x \colon X \to A$ and $y \colon Y \to A$ two subobjects of a cocommutative bialgebra $A$:
\begin{itemize}
\item[(i)] $m \cdot \sigma \cdot (x \ox y) = m  \cdot (x \ox y)  $
\item[(ii)] $[X,Y]$ = 0, i.e\ there exists a (unique) morphism of bialgebras $p \colon X \otimes Y \to A$ such that the following diagram commutes
\begin{equation}\label{connector}
\begin{tikzpicture}[descr/.style={fill=white},yscale=0.9,xscale=1]
\node (A) at (0,0) {$A$};
\node (C) at (2.5,2) {$Y$};
\node (D) at (0,2) {$X \otimes Y$};
\node (B) at (-2.5,2) {$X$};
  \path[-stealth]
 (B.south) edge node[left] {${x\,} $} (A.north west) 
 (C.south) edge node[right] {${\, y} $} (A.north east) 
 (B.east) edge node[above] {$1_X \ox u_Y$} (D.west) 
 (C.west) edge node[above] {$u_X \otimes 1_Y$} (D.east);
   \path[-stealth,dashed]
 (D.south) edge node[right] {$p$} (A.north);
\end{tikzpicture}
\end{equation}
\end{itemize}
\end{proposition}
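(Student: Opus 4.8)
The guiding observation is that a morphism $p$ fitting into \eqref{connector} is necessarily unique — this is the remark just before the statement, using that $X \ox Y$ is the categorical product (Proposition \ref{tensor product}) and that the associated point lies in $S_{coc}$ (Corollary \ref{product projection}, Proposition \ref{strong}) — and that it can only be $p = m \cdot (x \ox y) \colon X \ox Y \to A$. So the plan is: under (i), to verify that this morphism in $\C$ is a morphism of bialgebras making \eqref{connector} commute; conversely, to show that any $p$ as in \eqref{connector} forces the identity (i).

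\textbf{(i) $\Rightarrow$ (ii).} Set $p := m \cdot (x \ox y)$. The two commutativities of \eqref{connector} are immediate from the unit axiom \eqref{unital multiplication} and the preservation of units by $x$ and $y$: $p \cdot (1_X \ox u_Y) = m \cdot (x \ox (y \cdot u_Y)) = m \cdot (x \ox u_A) = x$, and symmetrically $p \cdot (u_X \ox 1_Y) = y$. That $p$ is a coalgebra morphism does \emph{not} use (i): the counit equation follows from \eqref{epsilon et m} together with $\epsilon_A \cdot x = \epsilon_X$ and $\epsilon_A \cdot y = \epsilon_Y$; and for the comultiplications one applies \eqref{delta et m} to $\Delta_A \cdot m$, substitutes $\Delta_A \cdot x = (x \ox x)\cdot \Delta_X$ and $\Delta_A \cdot y = (y \ox y)\cdot \Delta_Y$, and uses naturality of $\sigma$ to rewrite the result as $(p \ox p)\cdot \Delta_{X \ox Y}$ with $\Delta_{X \ox Y} = (1_X \ox \sigma \ox 1_Y)\cdot(\Delta_X \ox \Delta_Y)$ the comultiplication of the product. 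Compatibility of $p$ with the units is equally immediate. The one place (i) is needed is the compatibility of $p$ with the multiplications: writing $m_{X \ox Y} = (m_X \ox m_Y)\cdot(1_X \ox \sigma \ox 1_Y)$, one expands $p \cdot m_{X \ox Y}$ using the multiplicativity of $x$ and $y$, expands $m \cdot (p \ox p)$, and reshuffles both by naturality of $\sigma$ and associativity \eqref{ass} of $m$; the two resulting morphisms $(X \ox Y)^{\ox 2} \to A$ coincide exactly when $m \cdot \sigma \cdot (y \ox x) = m \cdot (y \ox x)$, and this identity is equivalent to (i) (precompose (i) with $\sigma$ and use naturality of $\sigma$ together with the symmetry $\sigma \cdot \sigma = 1$ of \eqref{symm}). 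Hence $p$ is a morphism of bialgebras with $p \cdot (1_X \ox u_Y) = x$ and $p \cdot (u_X \ox 1_Y) = y$, i.e.\ $[X,Y] = 0$.

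\textbf{(ii) $\Rightarrow$ (i).} Suppose $p$ as in \eqref{connector} is given; I claim $m \cdot (x \ox y) = p = m \cdot \sigma \cdot (x \ox y)$. For the first equality, using the triangles of \eqref{connector} and the multiplicativity of $p$, $m \cdot (x \ox y) = m \cdot (p \ox p) \cdot \big((1_X \ox u_Y)\ox(u_X \ox 1_Y)\big) = p \cdot m_{X \ox Y}\cdot\big((1_X \ox u_Y)\ox(u_X \ox 1_Y)\big)$, and a short check with \eqref{unital multiplication} shows $m_{X \ox Y}\cdot\big((1_X\ox u_Y)\ox(u_X\ox 1_Y)\big) = 1_{X \ox Y}$, whence $m \cdot (x \ox y) = p$. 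For the second equality, naturality of $\sigma$ gives $\sigma \cdot (x \ox y) = (y \ox x)\cdot \sigma$, so the same computation yields $m \cdot \sigma \cdot (x \ox y) = p \cdot m_{X \ox Y}\cdot\big((u_X\ox 1_Y)\ox(1_X\ox u_Y)\big)\cdot \sigma$; since $m_{X \ox Y}\cdot\big((u_X\ox 1_Y)\ox(1_X\ox u_Y)\big) = \sigma$ and $\sigma \cdot \sigma = 1_{X \ox Y}$ by \eqref{symm}, this equals $p$. Therefore (i) holds.

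\textbf{The main obstacle.} The delicate point is the multiplicativity step in (i) $\Rightarrow$ (ii): one must keep careful track of the four tensor factors through $1_X \ox \sigma \ox 1_Y$, through the two copies of $x \ox y$ arising from $p \ox p$, and through the associativity regrouping, in order to isolate precisely one instance of the commutation condition and nothing superfluous. The remaining verifications are routine once the candidate $p = m \cdot (x \ox y)$ is fixed; cocommutativity of $A$ plays no role in the equivalence itself, entering only — via Corollary \ref{product projection} and Proposition \ref{strong} — to guarantee the uniqueness of $p$.
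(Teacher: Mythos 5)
Your proof is correct and follows essentially the same route as the paper's: both directions hinge on the canonical candidate $p = m\cdot(x\ox y)$, with condition (i) entering only in the check that $p$ preserves multiplication, and the converse direction recovering (i) by writing both $m\cdot(x\ox y)$ and $m\cdot\sigma\cdot(x\ox y)$ as $p$ using the two triangles and the fact that $p$ is an algebra morphism. You are simply supplying the details that the paper leaves to the reader or relegates to a diagram.
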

\begin{proof}
$(i) \Rightarrow (ii)$, we define $p : X \ox Y \to A$ by $p \coloneqq m \cdot (x \ox y)$. It is easy to see that this map makes the diagram \eqref{connector} commutes. Moreover, thanks to $(i)$ we can prove that $p$ is a morphism of bialgebras (this proof is straghforward and is left to the reader). 
%
%\begin{align*}
%p \cdot m_{X \ox Y} &= m \cdot (x \ox y) \cdot (m \ox m) \cdot (1_X \ox \sigma \ox 1_Y) \\
%&= m \cdot  (m \ox m) \cdot (1_A \ox \sigma \ox 1_A) \cdot (x \ox y \ox x \ox y) \\
%&= m \cdot  (m \ox 1_A) \cdot (1_A \ox m \ox 1_A) \cdot (1_A \ox \sigma \ox 1_A) \cdot (x \ox y \ox x \ox y) \\
%&= m \cdot  (m \ox 1_A) \cdot (1_A \ox m \ox 1_A)  \cdot (x \ox y \ox x \ox y) \\
%&= m \cdot  (m \ox  m )  \cdot (x \ox y \ox x \ox y) \\
%&= m \cdot (p \ox p)
%\end{align*}
On the other way around, since $p$ is a morphism of bialgebras which makes \eqref{connector} commute, we can make the following diagram commute
\ctikzfig{huq}
and conclude that $(i)$ holds.
%\begin{align*}
%m \cdot (x \ox y) &= m \cdot (m \ox m) \cdot (u_A \ox x \ox y \ox u_A) \\
%&= m \cdot (m \ox m) \cdot (1_A \ox \sigma \ox 1_A) \cdot (u_A \ox y \ox x \ox u_A) \cdot \sigma\\
%&= m \cdot (x \ox y ) \cdot (m \ox m) \cdot (1_X \ox \sigma \ox 1_Y) \cdot (u_A \ox 1_Y \ox 1_X \ox u_A) \cdot \sigma\\
%&= p \cdot m_{X \ox Y} \cdot (u_A \ox 1_Y \ox 1_X \ox u_A) \cdot \sigma\\
%&= m \cdot (p \ox p) \cdot (u_A \ox 1_Y \ox 1_X \ox u_A) \cdot \sigma\\
%&= m \cdot (y \ox x) \cdot \sigma
%\end{align*}
\end{proof}

Note that we obtained a similar result in the case of cocommutative Hopf algebras in the paper \cite{GSV}.

\section{Smith is Huq}

Now we would like to compare the notion of centrality in the sense of Huq and in the sense of Smith. 

We first recall the centrality of two equivalence relations in the sense of Smith in any category with pullbacks \cite{BG}. 

\begin{definition}\label{connectorSmith}
Let $(R,r_0,r_1)$ and $(S,s_0,s_1)$  be two equivalence relations over the same object $X$. We denote the pullback of $s_0$ along $r_1$ by 
  \begin{center}
\begin{tikzpicture}[descr/.style={fill=white},yscale=1.2]
\node (A) at (0,0) {$R$};
\node (B) at (0,1.5) {$R \times_X S$};
\node (C) at (3,1.5) {$S$};
\node (D) at (3,0) {$X.$};
 \path[->,font=\scriptsize]
 (C.south) edge node[right] {$s_0$} (D.north)
  (B.south) edge node[right] {$p_1$} (A.north)
    (B) edge node[above] {$p_2$} (C)
       (A) edge node[above] {$r_1$} (D)
;
\end{tikzpicture}
% textidote: ignore end
\end{center}
 A connector between $R$ and $S$ is an arrow $\hat{p} \colon R \times_X S \to X$ such that 
 \begin{itemize}
 \item $xS\hat{p}(x,y,z)$ and $zR\hat{p}(x,y,z)$,
 \item $\hat{p}(x,x,y) = y$ and $\hat{p}(x,y,y)=x$,
 \item $\hat{p}(x,y,\hat{p}(y,u,v)) = \hat{p}(x,u,v) $ and $\hat{p}(\hat{p}(x,y,u),u,v) = \hat{p}(x,y,v)$,

 \end{itemize}
 where the above expressions are defined.
  When such an arrow exists, we say that the relations $R$ and $S$ centralize (in the sense of Smith).
\end{definition}

Note that this notion of centrality of equivalence relations is not independent of the centrality in the sense of Huq. If two equivalence relations centralize each other (in the sense of Smith) then it is true that the normal subobjects associated with them, called normalizations \cite{Bourn}, centralize in the sense of Huq \cite{BG}.

%More explicitly, let $(S,s_0,s_1)$ and $(R,r_0,r_1)$ be two equivalence relations over an object $X$ that centralize, then their normalizations
%$r_1 \cdot ker(r_0) \colon Ker(r_0) \to X$ and $s_1 \cdot ker(s_0) \colon Ker(s_0) \to X$ centralize in the sense of Huq:
%\[
%\begin{tikzpicture}[descr/.style={fill=white}xscale=1.2,scale=0.9]
%\node (A) at (0,0) {$X$};
%\node (C) at (4,2) {$Ker(r_0),$};
%\node (D) at (0,2) {$Ker(s_0) \times Ker(r_0)$};
%\node (B) at (-4,2) {$Ker(s_0)$};
%  \path[-stealth]
% (B.south) edge node[left,xshift=-15pt] {${s_1 \cdot ker(s_0)} $} (A.north west) 
% (C.south) edge node[right,xshift=15pt] {${r_1 \cdot ker(r_0)} $} (A.north east) 
% (B.east) edge node[above] {$(1,0)$} (D.west) 
% (C.west) edge node[above] {$(0,1)$} (D.east);
%   \path[-stealth,dashed]
% (D.south) edge node[right] {$p $} (A.north);
%\end{tikzpicture}
%\]where $ker(r_0) \colon Ker(r_0) \to R$ and $ker(s_0) \colon Ker(s_0) \to S$ are the kernels of $r_0$ and $s_0$ respectively. {\blue We quickly recall the construction of the proof given in \cite{BG}. The arrow $p$ is defined as $p = \hat{p} \cdot \psi \colon Ker(s_0) \times Ker(r_0) \to X$, where $\hat{p} \colon R \times_X S \to X $ is the connector between $R$ and $S$ (see \cref{connectorSmith}) and $\psi \colon Ker(s_0) \times Ker(r_0) \to R \times_X S $ is induced by the universal property of the pullback $(R \times_X S, \pi_1, \pi_2)$. }

  % D. Bourn and M. Gran, Centrality and normality in protomodular categories, Theory Appl.Categ. 9
 The converse is not true in general. If $\mathcal{C}$ is a category such that any two equivalence relations always centralize each other as soon as their normalizations centralize in the sense of Huq, will say that $\C$ satisfies the \textit{Smith is Huq} property \cite{BG}.
 %F. Borceux and D. Bourn, Mal’cev, protomodular, homological and semi-abelian categories, Math. Appl and D. Bourn, Commutator theory in strongly protomodular categories, Theory Appl. Categ. 13 (2004). 

For example, the categories of groups and cocommutative Hopf algebras over a field satisfy this condition \cite{GSV}. 

In the paper \cite{MFM}, the authors studied this condition in the context of $S$-protomodular categories. 

They considered pointed $S$-protomodular categories with respect to a class of points $S$ that are stable under composition and such that any product projection, i.e. any such diagram
\[\begin{tikzpicture}[descr/.style={fill=white},baseline=(A.base)] 
\node (A) at (0,0) {$X \x B$};
\node (B) at (2.5,0) {$B,$};
\node (C) at (-2.5,0) {$X$};
\path[->,font=\scriptsize]
([yshift=2pt]A.west) edge node[above] {$\pi_1$} ([yshift=2pt]C.east);
\path[->,font=\scriptsize]
([yshift=-4pt]C.east) edge node[below] {$i_1$} ([yshift=-4pt]A.west)
([yshift=-4pt]A.east) edge node[below] {$\pi_2$} ([yshift=-4pt]B.west)
([yshift=2pt]B.west) edge node[above] {$i_2$} ([yshift=2pt]A.east);
\end{tikzpicture}  \]
where $i_1 \coloneqq (1_X,0)$ and $i_2 \coloneqq (0,1_B)$, belongs to the class $S$.

In that context, the condition \emph{Smith is Huq} means that two $S$-equivalence relations centralize each other (in the sense of Smith) if and only if their normalization commute in the sense of Huq. The category of monoids (but also the category of monoids with operations \cite{MFMS}) satisfies this property, with respect to the class of Schreier split epimorphisms.

First, we prove that the class $S_{coc}$ is stable under composition.
\begin{proposition}
The class $S_{coc}$ of split extensions of cocommutative bialgebras is closed under composition. 
\end{proposition}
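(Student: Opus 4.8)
The plan is to construct the composite split extension explicitly and then verify the five conditions of Definition~\ref{definition split extension}, in the spirit of the proofs of Propositions~\ref{pullbackstable}, \ref{closedlimit} and~\ref{strong}. Suppose we are given two split extensions in $S_{coc}$, with data $(X,\kappa,A,\alpha,B,e,\lambda)$ and $(Y,\kappa',B,\alpha',C,e',\lambda')$, the codomain $B$ of the first coinciding with the domain of the second. The composite point is $\alpha'\cdot\alpha\colon A\to C$ with section $e\cdot e'\colon C\to A$; let $\nu\colon N\to A$ be its kernel in $\sf BiAlg_{\C,coc}$ (it exists, the category being finitely complete), which by \eqref{adj} is also a monomorphism of coalgebras. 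The one preliminary identity I would establish is
\[
\kappa'\cdot\lambda'\cdot\alpha\cdot\nu = \alpha\cdot\nu, \qquad\text{hence}\qquad e\cdot\kappa'\cdot\lambda'\cdot\alpha\cdot\nu = e\cdot\alpha\cdot\nu;
\]
it follows by applying condition~$(3)$ of the second extension to $\alpha\cdot\nu$, using that $\alpha\cdot\nu$ is a coalgebra morphism and that $\alpha'\cdot\alpha\cdot\nu=u_C\cdot\epsilon_N$ (so $e'\cdot\alpha'\cdot(\alpha\cdot\nu)=u_B\cdot\epsilon_N$), together with the (co)unit axioms.

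I would then define the candidate retraction $\lambda''\colon A\to N$ as the unique factorisation through $\nu$ of
\[
\ell := m_A\cdot\big((\kappa\cdot\lambda)\ox(e\cdot\kappa'\cdot\lambda'\cdot\alpha)\big)\cdot\Delta_A\colon A\to A.
\]
This $\ell$ factors through $\nu=\ker(\alpha'\cdot\alpha)$ because $\alpha'\cdot\alpha$ is an algebra morphism and $\alpha'\cdot\alpha\cdot\kappa\cdot\lambda=u_C\cdot\epsilon_A=\alpha'\cdot\alpha\cdot e\cdot\kappa'\cdot\lambda'\cdot\alpha$ (using $\alpha\cdot\kappa=u_B\cdot\epsilon_X$, $\alpha'\cdot\kappa'=u_C\cdot\epsilon_Y$, $\alpha\cdot e=1_B$, and that $\lambda,\lambda',\alpha$ preserve counits), so that $\alpha'\cdot\alpha\cdot\ell=u_C\cdot\epsilon_A$. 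Condition~$(5)$ for the composite extension (that $\lambda''$ is a coalgebra morphism preserving the unit) reduces to the same statement for $\ell$, which holds because $\ell$ arises from coalgebra morphisms through the convolution operation $(f,g)\mapsto m_A\cdot(f\ox g)\cdot\Delta_A$, and this operation sends pairs of coalgebra morphisms valued in $A$ to coalgebra morphisms exactly because $A$ is cocommutative (see \eqref{delta et m} and \eqref{coco}); the unit is preserved since $\kappa,\kappa',e,\alpha,\lambda,\lambda'$ all preserve units.

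Next I would check conditions~$(1)$–$(4)$ for $N\xrightarrow{\ \nu\ }A\xrightarrow{\ \alpha'\cdot\alpha\ }C$ with section $e\cdot e'$ and retraction $\lambda''$. Conditions~$(1)$ and~$(2)$ are routine: $\alpha'\cdot\alpha\cdot e\cdot e'=1_C$ and $\alpha'\cdot\alpha\cdot\nu=u_C\cdot\epsilon_N$ are immediate; for $\lambda''\cdot\nu=1_N$ one post-composes with the monomorphism $\nu$, replaces $\nu\cdot\lambda''$ by $\ell$, and uses $\Delta_A\cdot\nu=(\nu\ox\nu)\cdot\Delta_N$, the identity $e\cdot\kappa'\cdot\lambda'\cdot\alpha\cdot\nu=e\cdot\alpha\cdot\nu$ and condition~$(3)$ of the first extension; for $\lambda''\cdot e\cdot e'=u_N\cdot\epsilon_C$ one again post-composes with $\nu$ and uses $\lambda\cdot e=u_X\cdot\epsilon_B$, $\lambda'\cdot e'=u_Y\cdot\epsilon_C$, $\alpha\cdot e=1_B$ to collapse $\ell\cdot e\cdot e'$ to $u_A\cdot\epsilon_C$. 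Condition~$(3)$, namely $m_A\cdot\big((\nu\cdot\lambda'')\ox(e\cdot e'\cdot\alpha'\cdot\alpha)\big)\cdot\Delta_A=1_A$, is the cleanest: after substituting $\nu\cdot\lambda''=\ell$ and regrouping via coassociativity~\eqref{coass comultiplication} and associativity~\eqref{ass}, the inner factor $m_A\cdot\big((e\cdot\kappa'\cdot\lambda'\cdot\alpha)\ox(e\cdot e'\cdot\alpha'\cdot\alpha)\big)\cdot\Delta_A$ becomes $e\cdot m_B\cdot\big((\kappa'\cdot\lambda')\ox(e'\cdot\alpha')\big)\cdot\Delta_B\cdot\alpha=e\cdot\alpha$ by condition~$(3)$ of the second extension (using that $e$ is an algebra morphism and $\alpha$ a coalgebra morphism), so the whole expression collapses to $m_A\cdot\big((\kappa\cdot\lambda)\ox(e\cdot\alpha)\big)\cdot\Delta_A=1_A$ by condition~$(3)$ of the first extension.

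The step I expect to be the main obstacle is condition~$(4)$: $\lambda''\cdot m_A\cdot(\nu\ox e\cdot e')=1_N\ox\epsilon_C$. Post-composing with $\nu$ and expanding $\nu\cdot\lambda''=\ell$ no longer yields a purely formal regrouping, since $m_A$ now occurs inside the argument and must be transported through $\Delta_A$ by \eqref{delta et m}; this produces a braiding that has to be disentangled using cocommutativity~\eqref{coco} and the interchange law before conditions~$(4)$ of the two given extensions — together with $\kappa'\cdot\lambda'\cdot\alpha\cdot\nu=\alpha\cdot\nu$, $\alpha\cdot e\cdot e'=e'$ and $\lambda\cdot e\cdot e'=u_X\cdot\epsilon_C$ — can be applied, after which one cancels the coalgebra monomorphism $\nu$, exactly as in the computations accompanying the diagrams in the proofs of Propositions~\ref{closedlimit} and~\ref{strong}. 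A more conceptual alternative for the whole statement is to invoke the equivalence of Theorem~\ref{equi bialg} to assume both split extensions are semidirect products, so that $B=Y\rtimes C$ and $A=X\rtimes(Y\rtimes C)$; the claim then becomes the associativity isomorphism $X\rtimes(Y\rtimes C)\cong(X\rtimes Y)\rtimes C$ of iterated semidirect products over $C$ (with $Y$ acting on $X$ by restriction of the $B$-action along $\kappa'$ and $C$ acting diagonally), reducing the verification to a single bialgebra isomorphism, at the cost of first identifying $N$ with $X\rtimes Y$ and checking compatibility with all the split-extension data.
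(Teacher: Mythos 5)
Your proposal is correct and follows essentially the same route as the paper: you take the kernel of $\alpha'\cdot\alpha$, define the retraction as the factorisation through it of $m\cdot\bigl((\kappa\cdot\lambda)\ox(e\cdot\kappa'\cdot\lambda'\cdot\alpha)\bigr)\cdot\Delta$ (which is exactly the paper's equation \eqref{kernelaa'}), and your preliminary identity $\kappa'\cdot\lambda'\cdot\alpha\cdot\nu=\alpha\cdot\nu$ is the paper's factorisation \eqref{factorise kernel} combined with $\lambda'\cdot\kappa'=1$. The verification of the axioms, including the cancellation of the kernel as a monomorphism of coalgebras in condition (4), matches the diagrammatic computations the paper carries out.
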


\begin{proof}
Let 
% textidote: ignore begin
\begin{tikzpicture}[descr/.style={fill=white},baseline=(A.base)] 
\node (A) at (0,0) {$A$};
\node (B) at (2.5,0) {$B$};
\node (C) at (-2.5,0) {$X$};
\path[dashed,->,font=\scriptsize]
([yshift=2pt]A.west) edge node[above] {$\lambda$} ([yshift=2pt]C.east);
\path[->,font=\scriptsize]
([yshift=-4pt]C.east) edge node[below] {$\kappa$} ([yshift=-4pt]A.west)
([yshift=-4pt]A.east) edge node[below] {$\alpha$} ([yshift=-4pt]B.west)
([yshift=2pt]B.west) edge node[above] {$e$} ([yshift=2pt]A.east);
\end{tikzpicture} 
% textidote: ignore end 
and
% textidote: ignore begin
\begin{tikzpicture}[descr/.style={fill=white},baseline=(A.base)] 
\node (A) at (0,0) {$B$};
\node (B) at (2.5,0) {$C$};
\node (C) at (-2.5,0) {$Y$};
\path[dashed,->,font=\scriptsize]
([yshift=2pt]A.west) edge node[above] {$\lambda'$} ([yshift=2pt]C.east);
\path[->,font=\scriptsize]
([yshift=-4pt]C.east) edge node[below] {$\kappa'$} ([yshift=-4pt]A.west)
([yshift=-4pt]A.east) edge node[below] {$\alpha'$} ([yshift=-4pt]B.west)
([yshift=2pt]B.west) edge node[above] {$e'$} ([yshift=2pt]A.east);
\end{tikzpicture} be two composable split extensions. We can build the following diagram
\[
% textidote: ignore begin
\begin{tikzpicture}[descr/.style={fill=white},baseline=(A.base)] 
\node (A) at (0,0) {$A$};
\node (B) at (2.5,0) {$C$};
\node (C) at (-2.5,0) {$Z$};
\path[dashed,->,font=\scriptsize]
([yshift=2pt]A.west) edge node[above] {$\hat{\lambda}$} ([yshift=2pt]C.east);
\path[->,font=\scriptsize]
([yshift=-4pt]C.east) edge node[below] {$\hat{\kappa}$} ([yshift=-4pt]A.west)
([yshift=-4pt]A.east) edge node[below] {$\alpha' \cdot \alpha$} ([yshift=-4pt]B.west)
([yshift=2pt]B.west) edge node[above] {$e \cdot e'$} ([yshift=2pt]A.east);
\end{tikzpicture} 
% textidote: ignore end
\]
where $\hat{\kappa} \colon Z \to A$ is the kernel of $\alpha' \cdot \alpha$ and $\hat{\lambda}$ is the factorization through $\hat{\kappa}$ of the morphism of coalgebras $ m \cdot (\kappa\cdot \lambda \ox e \cdot \kappa' \cdot \lambda' \cdot \alpha) \cdot \Delta $. 
In particular, we have the following equality  %$\alpha' \cdot \alpha \cdot m \cdot (\kappa\cdot \lambda \ox e \cdot \kappa' \cdot \lambda' \cdot \alpha) \cdot \Delta = \epsilon_A \cdot u_C$ and 
\begin{equation}\label{kernelaa'}
 \hat{\kappa} \cdot \hat{\lambda} = m \cdot (\kappa\cdot \lambda \ox e \cdot \kappa' \cdot \lambda' \cdot \alpha) \cdot \Delta . 
\end{equation} 
We prove that this construction belongs to $S_{coc}$.
 The condition $(3)$ is proven via the commutativity of the following diagram.

  \tikzfig{composition3}

%\begin{align*}
%&m \cdot (\hat{\kappa} \cdot \hat{\lambda} \ox e \cdot e' \cdot \alpha' \cdot \alpha) \cdot \Delta \\ 
%&= m \cdot (m \cdot (\kappa\cdot \lambda \ox e \cdot \kappa' \cdot \lambda' \cdot \alpha) \cdot \Delta \ox e \cdot e' \cdot \alpha' \cdot \alpha) \cdot \Delta \\ 
%&= m \cdot (m \ox 1) \cdot (\kappa\cdot \lambda \ox e \cdot \kappa' \cdot \lambda' \cdot \alpha \ox e \cdot e' \cdot \alpha' \cdot \alpha) \cdot (\Delta \ox 1) \cdot \Delta \\ 
%&= m \cdot (1 \ox m) \cdot (\kappa\cdot \lambda \ox e \cdot \kappa' \cdot \lambda' \cdot \alpha \ox e \cdot e' \cdot \alpha' \cdot \alpha) \cdot (1 \ox \Delta) \cdot \Delta \\ 
%&= m \cdot (\kappa \cdot \lambda \ox e) \cdot (1 \ox m) \cdot (1 \ox \cdot \kappa' \cdot \lambda'   \ox  \cdot e' \cdot \alpha' ) \cdot (1 \ox \Delta) \cdot (1 \ox \alpha) \cdot \Delta \\ 
%&= m \cdot (\kappa \cdot \lambda \ox e)  \cdot (1 \ox  1  \ox  \epsilon ) \cdot (1 \ox \Delta) \cdot (1 \ox \alpha) \cdot \Delta \\ 
%&= m \cdot (\kappa \cdot \lambda \ox e)  \cdot (1 \ox \alpha) \cdot \Delta \\ 
%&= 1_A
%\end{align*}
The condition $(4)$ holds thanks to the commutativity of the two diagrams in Figure \ref{(4)composition},
where $\overline{\alpha \cdot \hat{\kappa}}$ is the factorization of $\alpha \cdot \hat{\kappa}$ through the kernel $\kappa'$ of $\alpha'$:
\begin{equation}\label{factorise kernel}
\kappa' \cdot \overline{\alpha \cdot \hat{\kappa}} = \alpha \cdot \hat{\kappa},
\end{equation}
and the fact that $\hat{\lambda} \cdot m \cdot (\hat{\kappa} \ox e \cdot e') $ and $( 1_Z \ox \epsilon_C)  $ are coalgebras morphisms and $\hat{\kappa}$ is a monomorphism of coalgebras.  
\end{proof}

Thanks to this proposition and Corollary \ref{product projection}, $\sf BiAlg_{\C,coc}$ is a category in which we can apply the results obtained in \cite{MFM}. To apply the results of \cite{MFM}, we recall the notions of $S$-equivalence relation and reflexive-multiplicative graph. 

\begin{definition}
An equivalence relation \[ 
\begin{tikzpicture}[descr/.style={fill=white},baseline=(A.base)] 
\node (A) at (0,0) {$A$};
\node (B) at (2.5,0) {$B$};
\path[->,font=\scriptsize]
([yshift=0pt]B.west) edge node[descr] {$e$} ([yshift=0pt]A.east)
([yshift=-6pt]A.east) edge node[below] {$\beta$} ([yshift=-6pt]B.west)
([yshift=6pt]A.east) edge node[above] {$\alpha$} ([yshift=6pt]B.west);
\end{tikzpicture}\] is an $S$-equivalence relation if the point $(\alpha, e)$ is in $S$.
\end{definition}
\begin{definition}\cite{CPP}
In a category $\mathcal{C}$ with pullbacks, a reflexive graph, denoted by 
% textidote: ignore begin
 \begin{tikzpicture}[descr/.style={fill=white},xscale=0.8,yscale=1,baseline=(A.base)]
\node (A) at (0,0) {$A_1$};
\node (D) at (3,0) {$A_0$};
 \path[->,font=\scriptsize]
([yshift=7pt]A.east) edge node[above] {$\delta$} ([yshift=7pt]D.west)
(D.west) edge node[descr] {$\iota$} (A.east)
([yshift=-7pt]A.east) edge node[below] {$\gamma$} ([yshift=-7pt]D.west);
\end{tikzpicture}, 
% textidote: ignore end
together with a morphism $c \colon A_1 \times_{A_0} A_1 \rightarrow A_1$ is called a \emph{reflexive-multiplicative graph} 
\begin{equation}\label{mgraph}
% textidote: ignore begin
 \begin{tikzpicture}[descr/.style={fill=white},yscale=1.2,baseline=(A.base)]
\node (A) at (0,0) {$A_1$};
\node (D) at (3,0) {$A_0$};
\node (X) at (-3,0) {$A_1\times_{A_0}A_1$};
 \path[->,font=\scriptsize]
 (X.east) edge node[above] {$c$} (A.west)
([yshift=7pt]A.east) edge node[above] {$\delta$} ([yshift=7pt]D.west)
(D.west) edge node[descr] {$\iota$} (A.east)
([yshift=-7pt]A.east) edge node[below] {$\gamma$} ([yshift=-7pt]D.west);
\end{tikzpicture} ,
% textidote: ignore end
\end{equation}
where  $A_1\times_{A_0}A_1$ is the (object part of the) following pullback
\[
\xymatrix{A_1\times_{A_0} A_1 \ar[d]_{p_1} \ar[r]^-{p_2} & A_1 \ar[d]^{ \gamma} \\
A_1 \ar[r]_-{\delta}& A_0
}
\] 
and $c$ is a multiplication that is required to satisfy the identities
\begin{equation}\label{RGM}
c \cdot (1_{A_1} , \iota \cdot \delta ) = 1_{A_1} = c \cdot (\iota \cdot \gamma, 1_{A_1} ),
\end{equation} 
 where $(1_{A_1} , \iota \cdot \delta ) \colon A_1 \rightarrow A_1\times_{A_0}A_1 $ and $(\iota \cdot \gamma, 1_{A_1} ) \colon A_1 \rightarrow A_1\times_{A_0}A_1 $ are induced by the universal property of the pullback $A_1\times_{A_0}A_1$.
 \end{definition}

To apply Theorem 4.2 in \cite{MFM} we need the following result.

\begin{proposition}
Every reflexive graph \[ 
\begin{tikzpicture}[descr/.style={fill=white},baseline=(A.base)] 
\node (A) at (0,0) {$A$};
\node (B) at (2.5,0) {$B$};
\node (C) at (-2.5,0) {$X$};
\node (C') at (0,2) {$X'$};
\path[->,dashed,font=\scriptsize]

([yshift=2pt]A.west) edge node[above] {$\lambda$} ([yshift=2pt]C.east)
([xshift=2pt]A.north) edge node[right] {$\lambda'$} ([xshift=2pt]C'.south);
\path[->,font=\scriptsize]

([yshift=0pt]B.west) edge node[descr] {$e$} ([yshift=0pt]A.east)
([yshift=-6pt]A.east) edge node[below] {$\beta$} ([yshift=-6pt]B.west)
([xshift=-4pt]C'.south) edge node[left] {$\kappa'$} ([xshift=-4pt]A.north)
([yshift=-4pt]C.east) edge node[below] {$\kappa$} ([yshift=-4pt]A.west)
([yshift=6pt]A.east) edge node[above] {$\alpha$} ([yshift=6pt]B.west);
\end{tikzpicture}\]
such that \begin{tikzpicture}[descr/.style={fill=white},baseline=(A.base)] 
\node (A) at (0,0) {$A$};
\node (B) at (2.5,0) {$B$};
\node (C) at (-2.5,0) {$X$};
\path[dashed,->,font=\scriptsize]
([yshift=2pt]A.west) edge node[above] {$\lambda$} ([yshift=2pt]C.east);
\path[->,font=\scriptsize]
([yshift=-4pt]C.east) edge node[below] {$\kappa$} ([yshift=-4pt]A.west)
([yshift=-4pt]A.east) edge node[below] {$\alpha$} ([yshift=-4pt]B.west)
([yshift=2pt]B.west) edge node[above] {$e$} ([yshift=2pt]A.east);
\end{tikzpicture} 
% textidote: ignore end 
and
% textidote: ignore begin
\begin{tikzpicture}[descr/.style={fill=white},baseline=(A.base)] 
\node (A) at (0,0) {$A$};
\node (B) at (2.5,0) {$B$};
\node (C) at (-2.5,0) {$X'$};
\path[dashed,->,font=\scriptsize]
([yshift=2pt]A.west) edge node[above] {$\lambda'$} ([yshift=2pt]C.east);
\path[->,font=\scriptsize]
([yshift=-4pt]C.east) edge node[below] {$\kappa'$} ([yshift=-4pt]A.west)
([yshift=-4pt]A.east) edge node[below] {$\beta$} ([yshift=-4pt]B.west)
([yshift=2pt]B.west) edge node[above] {$e$} ([yshift=2pt]A.east);
\end{tikzpicture} belong to $S_{coc}$ and $[ X , X'] = 0 $, is a reflexive-multiplicative graph.
\end{proposition}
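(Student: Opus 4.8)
The plan is to exhibit an explicit multiplication $c$ on the relevant pullback and to check the two identities in \eqref{RGM}, together with the fact that $c$ is a morphism of cocommutative bialgebras. Write $p_1,p_2\colon A\times_BA\to A$ for the projections of the pullback of $\alpha$ and $\beta$, constructed as in Section 1, so that $\alpha\cdot p_1=\beta\cdot p_2$ and the bialgebra structure of $A\times_BA$ is the one induced, component by component, from the product $A\ox A$ of Proposition \ref{tensor product}; note also that, since $\alpha\cdot e=1_B=\beta\cdot e$, the insertions $(1_A,e\cdot\alpha)$ and $(e\cdot\beta,1_A)$ appearing in \eqref{RGM} are well defined into $A\times_BA$. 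I would set
\[
c\;\coloneqq\;m\cdot\big((\kappa\cdot\lambda\cdot p_1)\ox p_2\big)\cdot\Delta_{A\times_BA}.
\]
Because $\lambda$ is a morphism of coalgebras preserving the unit (condition $(5)$ of Definition \ref{split extension}) and $\kappa$, $p_1$, $p_2$, $m$ are morphisms of coalgebras, $c$ is a morphism of coalgebras preserving the unit. The identities \eqref{RGM} then follow at once: precomposing $c$ with $(1_A,e\cdot\alpha)$ gives $m\cdot\big((\kappa\cdot\lambda)\ox(e\cdot\alpha)\big)\cdot\Delta=1_A$ by condition $(3)$ of Definition \ref{split extension}; precomposing $c$ with $(e\cdot\beta,1_A)$ gives $m\cdot\big((\kappa\cdot\lambda\cdot e\cdot\beta)\ox1_A\big)\cdot\Delta$, which, since $\lambda\cdot e=u_X\cdot\epsilon_B$ (condition $(2)$) and $\epsilon_B\cdot\beta=\epsilon_A$, equals $m\cdot(u_A\cdot\epsilon_A\ox1_A)\cdot\Delta=1_A$ by counitality and unitality.

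The substantive point is that $c$ is a morphism of \emph{algebras}, and this is exactly where the hypothesis $[X,X']=0$ enters. As $m_{A\times_BA}$ is component-wise, verifying $c\cdot m_{A\times_BA}=m\cdot(c\ox c)$ is a commutative-diagram computation, which I would organise as follows. First rewrite $c$ on the pullback, using $\alpha\cdot p_1=\beta\cdot p_2$, in the equivalent ``fully split'' form in which each composable pair is decomposed into the $X$-normal part (through $\kappa$) of its first leg, the $X'$-normal part (through $\kappa'$) of its second leg, and the common $e(B)$-part; this uses condition $(3)$ of Definition \ref{split extension} for \emph{both} points, which is available precisely because both belong to $S_{coc}$. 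Then expand $\lambda$ and $\lambda'$ of a product by means of \eqref{lambda morph Prop 2.5} and the induced action $\act=\lambda\cdot m\cdot(e\ox\kappa)$ (together with the analogous data for the lower point), and use the straightening rules relating $\kappa$ and $e$ through $\act$ (a consequence of condition $(3)$). After this expansion, reducing one side of the equality to the other forces one to interchange an image of $\kappa'$ with an adjacent image of $\kappa$, and the description of the Huq commutator obtained in Section 3 — namely that $[X,X']=0$ is equivalent to $m\cdot\sigma\cdot(\kappa\ox\kappa')=m\cdot(\kappa\ox\kappa')$ — is exactly what permits this interchange, after which the two sides match. Cocommutativity is used throughout to keep the intermediate morphisms morphisms of coalgebras.

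I expect this last verification to be the main obstacle: there are two interacting semidirect-product structures, namely $X\rtimes B$ arising from $(\alpha,e)$ and $X'\rtimes B$ arising from $(\beta,e)$, both built over the common splitting $e$, and the bookkeeping needed to put both sides of $c\cdot m_{A\times_BA}=m\cdot(c\ox c)$ into a common normal form and to isolate the single place where $m\cdot\sigma\cdot(\kappa\ox\kappa')=m\cdot(\kappa\ox\kappa')$ is invoked is delicate; as elsewhere in the paper it is best displayed as an explicit commutative diagram. I also expect that uniqueness of $c$, although not needed for the statement, could be extracted from $S_{coc}$-protomodularity (Theorem \ref{thmproto} and Proposition \ref{strong}), since $\big(p_2,(e\cdot\beta,1_A)\big)$ lies in $S_{coc}$ by Proposition \ref{pullbackstable} and is therefore a strong point.
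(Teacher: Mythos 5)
Your proposal follows essentially the same route as the paper: your multiplication $c=m\cdot\big((\kappa\cdot\lambda\cdot p_1)\ox p_2\big)\cdot\Delta_{A\otimes_BA}$ coincides with the paper's $c=m\cdot(\kappa\cdot\lambda\ox 1_A)\cdot\varepsilon$ because $(p_1\ox p_2)\cdot\Delta_{A\otimes_BA}=\varepsilon$, and your verifications of the identities \eqref{RGM} and of the coalgebra-morphism property agree with the paper's. The one step you only sketch --- that $c$ is a morphism of algebras, with $[X,X']=0$ entering through $m\cdot\sigma\cdot(\kappa\ox\kappa')=m\cdot(\kappa\ox\kappa')$ --- is exactly the content of the paper's Figures \ref{cmorph1} and \ref{cmorph2}, and your outline of that diagram chase is correctly oriented.
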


\begin{proof}
Let consider the following pullback
\[
\begin{tikzpicture}[descr/.style={fill=white},baseline=(current  bounding  box.center),yscale=0.9,xscale=1.1] 
\node (A) at (0,0) {$A \otimes_B A$};
\node (B) at (2.5,0) {$A$};
\node (A') at (0,-2) {$A$};
\node (B') at (2.5,-2) {$B$};
\path[->,font=\scriptsize]
(B) edge node[right] {$\beta$}  (B')
(A.south) edge node[left] {$ p_1$} (A'.north)
(A'.east) edge node[below] {$\alpha$} (B'.west)
(A.east) edge node[below] {$p_2$} (B.west)
;
\end{tikzpicture} \]
where $A \otimes_B A$ is the object in the following equalizer
\begin{equation}\label{eq compo}
% textidote: ignore begin 
\begin{tikzpicture}[descr/.style={fill=white},baseline=(A.base),xscale=1.5] 
\node (A) at (-1,0) {$A \otimes A$};
\node (B) at (2.5,0) {$A \ox B \ox A $};
\node (C) at (-2.5,0) {$A \otimes_B A$};
\path[->,font=\scriptsize]
([yshift=-4pt]A.east) edge node[below] {$(1_A \ox \alpha \ox 1_A) \cdot ( \Delta \ox 1_A) $} ([yshift=-4pt]B.west)
([yshift=0pt]C.east) edge node[above] {$\varepsilon$} ([yshift=0pt]A.west)
([yshift=4pt]A.east) edge node[above] {$(1_A \otimes \beta \ox 1_C) \cdot (1_A \otimes \Delta)  $} ([yshift=4pt]B.west)
;
\end{tikzpicture}.
\end{equation}

% textidote: ignore end 
By defining $c \colon A \ox_B A \to A$ by 
\[c =  m \cdot (\kappa \cdot \lambda \ox 1_A) \cdot (\varepsilon \otimes \varepsilon)\]
we can verify that this reflexive graph is multiplicative thanks to Figure \ref{cmorph1} 
and Figure \ref{cmorph2}.
\end{proof}

This result and Theorem 4.2 in \cite{MFM}, implies the following result

\begin{theorem}\label{thmhuq}
In $\sf BiAlg_{\C,coc}$, let $S_{coc}$ be the class of split extensions of cocommutative bialgebras as defined in Definition \ref{split extension}, two $S_{coc}$-equivalence relations centralize each other (in the sense of Smith) if and only if their normalization commute in the sense of Huq. 
\end{theorem}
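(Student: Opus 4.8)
The plan is to verify the hypotheses of Theorem 4.2 in \cite{MFM} and then simply invoke that theorem. Recall that \cite{MFM} establishes the \emph{Smith is Huq} equivalence for a pointed $S$-protomodular category provided that (a) $S$ is stable under composition, (b) every product projection belongs to $S$, and (c) every reflexive graph whose ``legs'' $(\alpha,e)$ and $(\beta,e)$ are points in $S$ and whose kernels $X$, $X'$ Huq-commute is automatically a reflexive-multiplicative graph. We have already assembled all three ingredients: $\sf BiAlg_{\C,coc}$ is $S_{coc}$-protomodular by Theorem \ref{thmproto}, $S_{coc}$ is closed under composition by the proposition preceding Definition \ref{connectorSmith}, product projections lie in $S_{coc}$ by Corollary \ref{product projection}, and the multiplicativity of such reflexive graphs is exactly the content of the last proposition above.

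More precisely, I would proceed as follows. First, I would record that $\sf BiAlg_{\C,coc}$ is a pointed finitely complete category (the zero object is $I$ with its trivial bialgebra structure, and it has all finite limits since $\C$ has equalizers, using the explicit constructions of products, equalizers and pullbacks recalled in Section 1). Second, I would cite Theorem \ref{thmproto} to get $S_{coc}$-protomodularity, Corollary \ref{product projection} to get that all product projections are in $S_{coc}$, and the composition-closure proposition to verify the standing hypotheses of \cite{MFM}. Third, for the crucial direction of \emph{Smith is Huq}, I would note that in a pointed $S$-protomodular category every $S$-equivalence relation $R$ on $X$ has a normalization $\iota_R\colon N_R \to X$ (its kernel), and conversely the relevant reflexive graphs produced by a pair of centralizing $S$-equivalence relations are precisely the ``double'' reflexive graphs whose two legs are the quotient points and whose kernels are the two normalizations. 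The last proposition shows that whenever these two normalizations Huq-commute ($[X,X']=0$), the associated reflexive graph is multiplicative. Theorem 4.2 of \cite{MFM} then says that, under exactly these circumstances, the existence of a multiplicative structure on the reflexive graph is equivalent to the centralization of the two $S$-equivalence relations in the sense of Smith; combining this with the general implication ``Smith $\Rightarrow$ Huq'' of \cite{BG} yields the stated equivalence.

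The main obstacle — which is really discharged in the preceding results rather than here — is the multiplicativity claim: producing the connector $c \colon A\otimes_B A \to A$ on the pullback and checking it is a morphism of bialgebras satisfying \eqref{RGM}. Once that is in hand, the present theorem is purely a matter of translating our setting into the hypotheses of \cite{MFM}: identifying $S_{coc}$-equivalence relations with the appropriate points, matching normalizations with kernels, and observing that the multiplicative structure on the reflexive graph is what \cite{MFM} calls the connector witnessing Smith-centrality. So the proof is short:

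\begin{proof}
The category $\sf BiAlg_{\C,coc}$ is pointed with zero object $I$ and finitely complete, and by Theorem \ref{thmproto} it is $S_{coc}$-protomodular. The class $S_{coc}$ is stable under composition (by the proposition above) and contains all product projections by Corollary \ref{product projection}, so the standing assumptions of Theorem 4.2 in \cite{MFM} are satisfied. The previous proposition shows that whenever the normalizations $X$ and $X'$ of two $S_{coc}$-equivalence relations satisfy $[X,X']=0$, the associated reflexive graph is a reflexive-multiplicative graph. Hence Theorem 4.2 of \cite{MFM} applies and gives exactly the stated equivalence: two $S_{coc}$-equivalence relations centralize each other in the sense of Smith if and only if their normalizations commute in the sense of Huq.
\end{proof}
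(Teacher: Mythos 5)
Your proposal is correct and follows essentially the same route as the paper: the paper's own proof is precisely the one-line observation that the preceding proposition (multiplicativity of the relevant reflexive graphs) together with Theorem 4.2 of \cite{MFM} yields the statement, with the standing hypotheses supplied by Theorem \ref{thmproto}, Corollary \ref{product projection} and the composition-closure proposition. Your version merely spells out these verifications more explicitly, which is a faithful elaboration rather than a different argument.
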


By restricting the above theorem to the symmetric monoidal category $\sf (Set, \times , \{ \star \})$, we obtain the result on monoids of \cite{MFM}. Note that, in \cite{MFM}, they also prove this result for every category of monoids with operations.

\section{Conclusion}
In \cite{Stercksplit}, we introduced a class of split extensions of (cocommutative) bialgebras, called $S_{coc}$, such that their category is equivalent to the category of actions of (cocommutative) bialgebras and we proved the Split Short Five Lemma when we restrict it to the split extensions of (cocommutative) bialgebras. In this paper, we use this class $S_{coc}$, to prove that $\sf BiAlg_{\C,coc}$ is $S_{coc}$-protomodular. It implies that we can now apply the theory and results of \cite{BM3x3,Bournpartial} to obtain new results for cocommutative bialgebras. For example, it follows that $\sf BiAlg_{\C,coc}$ is a $S_{coc}$-Mal'tsev category \cite{BM3x3,Bournpartial}, hence any $S_{coc}$-reflexive relation is transitive.
Another result of this paper is the description of the notion of centrality in the sense of Huq for cocommutative bialgebras. Moreover, we prove that  $\sf BiAlg_{\C,coc}$ satisfies the ``partial'' \emph{Smith is Huq} condition, meaning that two $S_{coc}$-equivalence relations centralize each other as soon as their normalization commute in the sense of Huq. Note that Theorem \ref{thmhuq} and Theorem \ref{thmproto} generalize results in \cite{BMS} and \cite{MFM} on the category of monoids.

%\appendix
%\section{Appendix}
\begin{figure}[b]
  \tikzfig{composition4}
 \caption{Condition (4) of the composition of two split extensions}
 \label{(4)composition}
\end{figure}

\begin{landscape}
\begin{figure}
\tikzfig{cmorphisme}
\caption{The morphism $c$ is a morphism of bialgebras (part 1)}
\label{cmorph1}
\end{figure}
\end{landscape}

\begin{landscape}
\begin{figure}
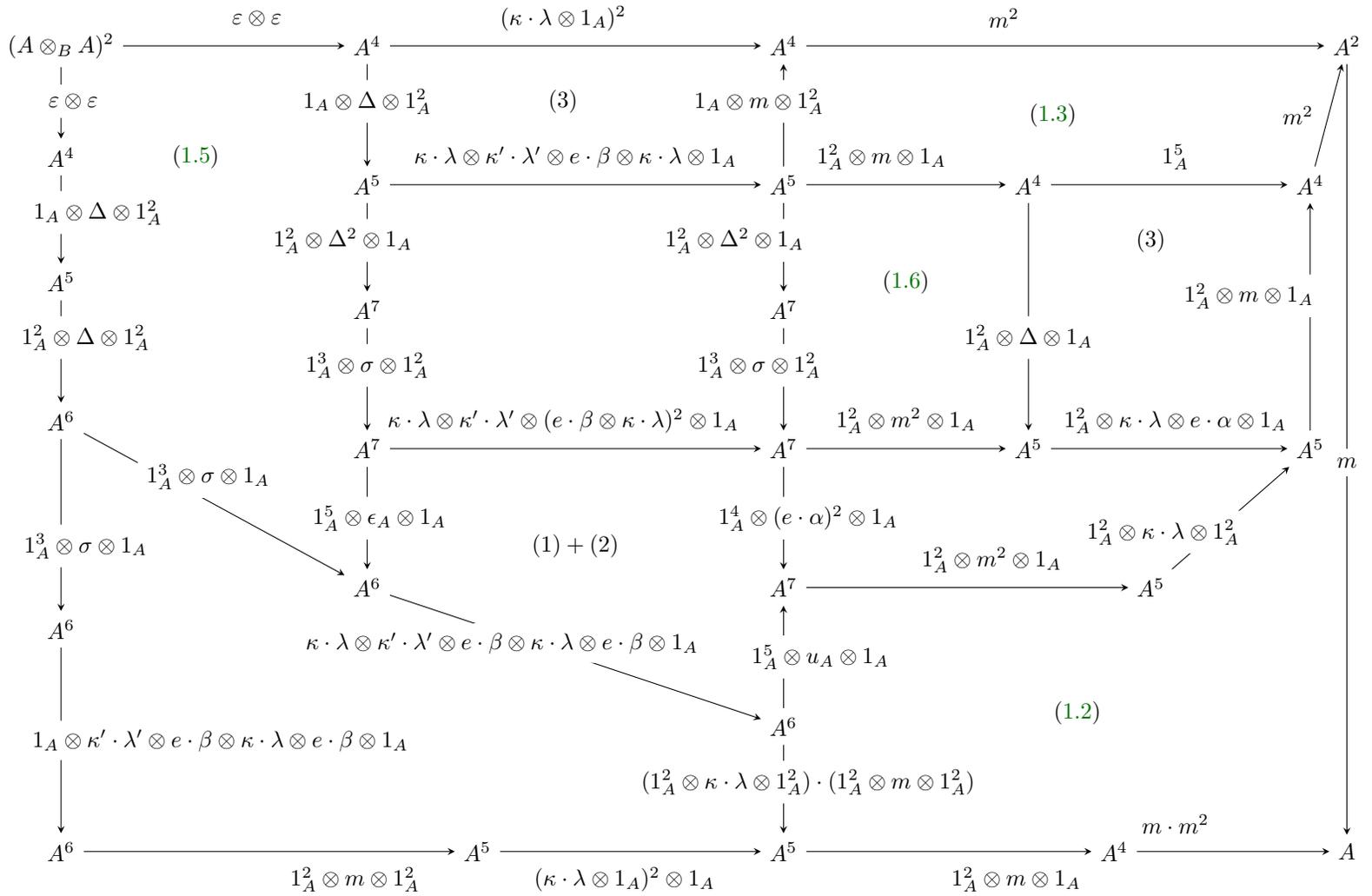

\tikzfig{cmorphisme2}
\caption{The morphism $c$ is a morphism of bialgebras (part 2)}
\label{cmorph2}
\end{figure}
\end{landscape}

\bibliographystyle{mybibstyleJL}

\bibliography{these}
\end{document}